\newtheorem{theorem}{Theorem}[section]
\newtheorem{def.}[theorem]{Definition}
\newtheorem{cor.}[theorem]{Corollary}
\newtheorem{conj.}[theorem]{Conjecture}
\newtheorem{lemma}[theorem]{Lemma}
\newtheorem{proposition}[theorem]{Proposition}
\newcommand{\norm}[2]{
\left\| #2 \right\|_{#1}
}
\def\ZZ{{\mathbb{Z}}}
\def\Z{{\mathbb{Z}}}
\def\RR{{\mathbb{R}}}
\def\RRd{{\mathbb{R}^d}}
\def\LtRd{{L^2(\mathbb{R}^d)}}
\def\Hil{{{\mathcal H}}}
\def\FF{\mathcal{F}}
\newcommand{\clsp}[1]{
{ \overline{span} \left\{ #1 \right\} }
}
\definecolor{darkviolet}{rgb}{0.58,0,0.83} 
\definecolor{blue}{rgb}{.134,.3,.6}
\newcommand\widecheck[1]{
\mathcal{F} #1 \mathcal{F}^{-1}
}
\newcommand\widehatxxl[1]{
\mathcal{F}^{-1} #1 \mathcal{F}
}
\title{An operator based approach to irregular frames of translates}
\author{P. Balazs\thanks{
Acoustics Research Institute, Austrian Academy of Sciences,
Wohllebengasse 12-14, 1040 Wien, Austria. E-mail:
peter.balazs@oeaw.ac.at.} \, and  S. Heineken\thanks{Departamento de
Matem\'atica and IMAS-CONICET, Facultad de Ciencias Exactas y Naturales, Universidad
de Buenos Aires, Pabell\'on I, Ciudad Universitaria, C1428EGA
C.A.B.A., Argentina, tel/fax:+541145763335. E-mail:
sheinek@dm.uba.ar (corresponding author).}}
\begin{document}
\date{}
\maketitle

\begin{abstract}
We consider translates of functions in $L^2(\RRd)$ along an
irregular set of points. Introducing a notion of pseudo-Gramian
function for the irregular case, we obtain conditions for a family
of irregular translates to be a Bessel sequence or Riesz sequence.

We find a representation of the canonical dual of a frame sequence
$\{\phi( \cdot - \lambda_k)\}_{k\in \ZZ}$ - where $\phi$ is a bandlimited function - in terms of its Fourier transform.
\end{abstract}

{\bf Key words:} Frames, Riesz bases, irregular translates,
canonical duals

{\bf AMS subject classification:} Primary 42C40. Secondary 42C15.

\section{Introduction}

Frames of translates, i.e. frames originated by the shifts of a given, fixed function,
are an important mathematical background for Gabor \cite{ole1},
wavelet \cite{Bentreib01} and sampling theory \cite{aldrgroech1}.
One generating function $\phi$ is shifted to create the analyzing family of elements, $\left\{ \phi \left( \cdot - \lambda_k \right) \right\}_{k\in \ZZ}$ for a sequence $\Lambda=\{\lambda_k\}_{k\in \ZZ}.$
These type of frames play a main role in the theory of shift invariant spaces (SIS) \cite{bo00,dedero94, dedero94b}, which is very useful for the modeling of problems in signal processing, and is central in approximation, sampling and wavelet theory.

Frames of translates are closely related to frames of exponentials also called {\sl Fourier frames} \cite{duffschaef1,Seip1,OS02}.

 The regular shifts, e.g. when $\lambda_k = k b$ for some $b>0$, were studied e.g. in \cite{Cachka01}. Investigations of irregular frames of translates, where the set $\Lambda$ has no regular structure, were done e.g. in \cite{ole1,aldrgroech1}. But there are several results that have not been generalized, in particular the concept of Gramian function, which is a fundamental tool for describing properties of systems of translates. Also no explicit formulas for the canonical dual of irregular frames of translates were given so far. We do this in the present paper.

The paper is structured as follows. In Section~\ref{sec:notprel0} we
summarize basic notations and preliminaries. In
Section~\ref{mainres0} we present the main results. In
Section~\ref{sec:gramirreg0} we generalize the concept of the
Gramian function and use it to describe Bessel and Riesz properties
of irregular shifts. In Section \ref{sec:operbaseirreg0} we look at
the interrelation of the frame-related operators for those sequences
to those of exponential functions.  This permits us to study
Bessel, frame and Riesz properties of irregular frames of
translates, and furthermore give formulas for the canonical dual,
and for equivalent Parseval frames.

\section{Notation and Preliminaries} \label{sec:notprel0}

Given $\lambda \in\RRd,$ we denote by $e_{\lambda}$ the function defined by $e_{\lambda}(x)=e^{-2\pi i\lambda x}.$
The operators $T_{\lambda}$ and $\mathcal{M}_{\lambda}$  are given by $T_{\lambda}f(x)=f(x-\lambda)$ and $\mathcal{M}_{\lambda}f(x)= e_{- \lambda} f(x)$ respectively.  We write $\hat{f}$ for the Fourier transform,
$\mathcal F : \LtRd \rightarrow \LtRd$, given by $\mathcal F (f) (w) = \hat{f}(w)=\int_{\RRd}f(x) e^{-2\pi i  x w}dx,$ for $f\in L^1(\RRd)\cap L^2(\RRd)$ with the natural extension to $L^2(\RRd).$



In this paper $E \subseteq \RRd$ is a bounded set.
We denote
$$PW_E = \left\{ f \in \LtRd \left|\right. \mbox{supp} (\hat f) \subseteq E  \right\}.$$

The space
\begin{equation*}\label{sec:embedltwo1}
\left\{ f \in L^2(\RRd) : f(x) = 0 \text { for } a.e. \ x \in \RRd \backslash E \right\},
\end{equation*}
is a closed subspace of $L^2(\RRd)$ which is isomorphic to $L^2(E)$, and we will identify these two spaces.
The Fourier transform maps $PW_E$ onto $L^2(E).$

When we write that $\{e_{\lambda_k}\}_{k\in \Z}$ is a frame (frame
sequence, Bessel sequence, or Riesz basis) of $L^2(E)$  we will mean
that the set $\{e_{\lambda_k} \chi_E\}_{k\in \Z}$ has that property,
where  $\chi_E$ stands for the indicator function of $E$, i.e. it is
an outer frame, Bessel or Riesz basis \cite{alcamo04}. For a given
operator $O$ we denote its pseudo-inverse by $O^\dagger$. Motivated
by that, for a function $\psi$ we denote
$$\psi^\dagger (x) = \left\{ \begin{array}{c c} 1 / \psi(x) & x \in supp (\psi) \\ 0 & \mathrm{otherwise} \end{array} \right.$$

%
%
%

Throughout this work $\Lambda=\{\lambda_k\}_{k\in \ZZ}$ will be a sequence in $\RRd.$
For a function $f$ we denote its range by $R_f.$

\subsection{Frames}

Let $\mathcal{H}$ be a separable Hilbert space.
A sequence $\{\psi_k\}_{k\in \ZZ}\subseteq \mathcal{H}$ is a {\sl frame} for $\mathcal{H}$ if there exist positive
constants $A$ and $B$ that satisfy
$$A \|f\|^2\leq\sum_{k\in \ZZ}|\langle f,\psi_k\rangle|^2 \leq
B\|f\|^2\,\,\,\,\,\forall f \in \mathcal{H}.$$ We denote the optimal
bounds by $A^{(opt)}$ and $B^{(opt)}$. If $A=B$ then it is called a
{\sl tight frame} and if $A=B=1$ a {\sl Parseval frame}. If
$\{\psi_k\}_{k\in \ZZ}$ satisfies the right inequality in the above
formula, it is called a {\sl Bessel sequence}.

 A sequence $\{\psi_k\}_{k\in \ZZ}\subseteq \mathcal{H}$ is a {\it Riesz basis} for
$\mathcal{H}$ if it is complete in $\mathcal{H}$ and there exist $0<A\leq B$
such that for every finite scalar sequence
$\{c_k\}_{k\in\ZZ}$
\begin{equation*}
A \sum |c_k|^2 \leq \left\|\sum_{k\in\Z} c_k \psi_k\right\|^2\leq
B\sum |c_k|^2\,.
\end{equation*}
The constants $A$ and $B$ are called {\it  Riesz bounds}.

We say that $\{\psi_k\}_{k\in \Z}\subseteq \mathcal{H}$ is a {\it frame sequence}
({\it Riesz sequence}) if it is a frame (Riesz basis) for its span.

For $\Psi  = \{\psi_k\}_{k\in \ZZ}$ a Bessel sequence, its {\sl analysis operator} $C: \mathcal{H}\rightarrow l^2(\ZZ)$ is defined by $C(f) =  \left\{ \left< f , \psi_k \right>_{\Hil} \right\}_{k\in \ZZ} $, and its {\sl synthesis operator} $D:l^2(\ZZ) \rightarrow \mathcal{H}$ by
$ Dc = \sum \limits_{k\in \ZZ} c_k \psi_k$.

If $\Psi$ is a frame of $\mathcal{H}$, the {\sl frame operator} $S: \mathcal{H}\longrightarrow \mathcal{H}$ given by
$S(f)= D_{\Psi}C_{\Psi} (f)=\sum \limits_{k\in \ZZ} \left< f , \psi_k \right> \psi_k$ is bounded, invertible, self-adjoint and positive.
There always exists a {\sl dual frame} of  $\Psi,$  which is a frame $\{\phi_k\}_{k\in\ZZ}$  such
that
\begin{equation}\label{dualeq}
f=\sum_{k\in\ZZ}\langle f,\phi_k\rangle \psi_k \,\,\,\forall f\in \mathcal{H}~~~~
\hbox{or}~~~~f=\sum_{k\in\ZZ}\langle f,\psi_k\rangle  \phi_k\,\,\,\forall f\in
\mathcal{H}.
\end{equation}
The sequence $\{S^{-1}\psi_k\}_{k\in \ZZ}$  satisfies
(\ref{dualeq}). It is called the {\sl canonical dual frame } of $
\{\psi_k\}_{k\in \ZZ}$ and we will denote it by
$\{{\widetilde{\psi_k}}\}_{k\in \ZZ}$.   We distinguish between the
operators by subscripts, e.g. $C_e$ is the analysis operator for the
set of exponentials $\{ e_{\lambda_k} \}_{k\in \ZZ}$ and $C_\phi$
those of the system of translates $\{ T_{\lambda_k} \phi \}_{k\in
\ZZ}$. For the operators connected to the canonical dual we will
write $\tilde C_e$ respectively $\tilde C_\phi$.

To every frame $\Psi=\{\psi_k\}_{k\in\ZZ}$ a {\sl canonical tight frame} can be associated, which is the sequence $\{S^{-\frac{1}{2}}\psi_k\}_{k\in \ZZ},$ where $S^{-\frac{1}{2}}$ is the positive square root of $S^{-1}.$

The (bi-infinite) {\sl Gram matrix} $G$ for $\Psi$ is given by   
$\left[ G \right]_{j,m} = \left< \psi_m , \psi_j \right>,$ for $j,m \in \ZZ.$
This matrix defines an operator by the standard matrix multiplication on $l^2(\ZZ)$, the set of square summable sequences.
It is known, see e.g. \cite{xxlstoeant11}, that this operator is bounded, if and only if the sequence forms a Bessel sequence. It is invertible on all of $l^2(\ZZ)$ if and only if the sequence is a Riesz sequence.


\subsection{Multipliers} \label{sec:Ltmultapp1}

In this work the operator that consists of a multiplication by a given function is important.

%
%
%

For $\LtRd$ we can give the following result, which is either known, see e.g. \cite{conw1} or can be easily proved:
\begin{proposition} \label{sec:LTmult1} Let $M_\phi : \LtRd \rightarrow \LtRd$ be the operator $\left( M_\phi (f) \right) (t) = \phi(t) f (t) $.
\begin{enumerate}
\item  $\phi \in L^\infty$  if and only if $M_\phi$ is bounded. In this case $\| \phi \|_\infty = \| M_\phi \|_{Op}$.
%
\item Assume there exist $p,P>0$ such that $ p \le | \phi(x) | \le P $ for almost all $x \in supp{ (\phi)}$. Then $Im ( M_\phi) = L^2( supp(\phi)),\,\, ker (M_\phi) = L^2(\RRd \backslash supp(\phi))$ and $M_\phi^\dagger = M_{\phi^\dagger}$.
On $supp (\phi)$ we have
$$ \norm{}{M_\phi} \ge p \norm{}{f}.$$
\item $M_\phi$ is boundedly invertible if and only if there exists $p>0$  such that $p \le | \phi(x) |$ a.e.
\end{enumerate}
\end{proposition}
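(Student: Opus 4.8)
The plan is to prove each of the three items in Proposition~\ref{sec:LTmult1} by elementary functional-analytic arguments about the multiplication operator $M_\phi$ on $\LtRd$.

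For item (1), I would first show that $\phi \in L^\infty$ implies $M_\phi$ is bounded with $\|M_\phi\|_{Op} \le \|\phi\|_\infty$: this is immediate since $\|\phi f\|_2^2 = \int |\phi(t)|^2 |f(t)|^2\,dt \le \|\phi\|_\infty^2 \|f\|_2^2$. For the reverse, I would assume $M_\phi$ is bounded with norm $N$ and argue that $|\phi(x)| \le N$ a.e.; otherwise the set $A_\varepsilon = \{x : |\phi(x)| > N + \varepsilon\}$ has positive measure for some $\varepsilon > 0$, and (taking a subset of finite positive measure if necessary) testing $M_\phi$ on $f = \chi_{A_\varepsilon}$ — or on $\chi_B$ for $B \subseteq A_\varepsilon$ of finite positive measure — gives $\|M_\phi f\|_2 \ge (N+\varepsilon)\|f\|_2$, a contradiction. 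This simultaneously shows boundedness forces $\phi \in L^\infty$ and yields the equality $\|\phi\|_\infty = \|M_\phi\|_{Op}$.

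For item (2), under the hypothesis $p \le |\phi(x)| \le P$ on $supp(\phi)$, I would observe that $M_\phi$ maps $L^2(\RRd\setminus supp(\phi))$ to zero, so this space is contained in $\ker(M_\phi)$; conversely if $M_\phi f = 0$ then $\phi(t) f(t) = 0$ a.e., and since $|\phi| \ge p > 0$ on $supp(\phi)$ we get $f = 0$ a.e. on $supp(\phi)$, giving $\ker(M_\phi) = L^2(\RRd\setminus supp(\phi))$. For the image, clearly $Im(M_\phi) \subseteq L^2(supp(\phi))$; for the reverse inclusion, given $g \in L^2(supp(\phi))$ the function $f = \phi^\dagger g$ lies in $L^2$ (because $|\phi^\dagger| \le 1/p$ there) and $M_\phi f = g$, so $Im(M_\phi) = L^2(supp(\phi))$ and in particular the image is closed. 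The identity $M_\phi^\dagger = M_{\phi^\dagger}$ then follows by checking the defining properties of the pseudo-inverse: $M_{\phi^\dagger}$ vanishes on $Im(M_\phi)^\perp = \ker(M_\phi)$ (both equal $L^2(\RRd\setminus supp(\phi))$), and $M_{\phi^\dagger} M_\phi$ and $M_\phi M_{\phi^\dagger}$ act as the orthogonal projection onto $L^2(supp(\phi))$, since $\phi^\dagger \phi = \chi_{supp(\phi)}$ a.e. The lower bound $\|M_\phi f\|_2 \ge p\|f\|_2$ for $f$ supported in $supp(\phi)$ is the same pointwise estimate as above.

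For item (3), $M_\phi$ boundedly invertible means it is bounded (so $\phi \in L^\infty$ by item (1)) and bijective with bounded inverse; the inverse must itself be a multiplication operator, namely $M_{1/\phi}$, which is bounded iff $1/\phi \in L^\infty$, i.e. iff there is $p>0$ with $|\phi| \ge p$ a.e. (and then also $supp(\phi) = \RRd$ up to null sets). Conversely if $p \le |\phi|$ a.e. and $\phi \in L^\infty$, then $M_{1/\phi}$ is a bounded two-sided inverse. The main subtlety — really the only place one must be careful — is the measure-theoretic argument in item (1) showing $\|M_\phi\|_{Op} \ge \|\phi\|_\infty$: one must pass to subsets of finite positive measure to have legitimate $L^2$ test functions, and handle the case $\|\phi\|_\infty = \infty$ separately (there boundedness simply fails). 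Everything else is routine manipulation of the pointwise definition of $M_\phi$.
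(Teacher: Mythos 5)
Your proof is correct. Note that the paper itself gives no proof of Proposition~\ref{sec:LTmult1} --- it is stated as ``either known, see e.g.\ \cite{conw1}, or can be easily proved'' --- so there is no argument to compare against; yours is the standard textbook argument and it fills the gap completely. The two points you flag are indeed the only delicate ones: passing to subsets of finite positive measure when testing against $\chi_{A_\varepsilon}$ in item (1), and the observation in item (3) that bounded invertibility implicitly requires $\phi\in L^\infty$ as well (the paper's ``if and only if'' is slightly loose there, and you handle it correctly by identifying the inverse as $M_{1/\phi}$ and applying item (1) to it).
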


\section{Main results} \label{mainres0}

For regular frames of translates the Gramian function is a very
important concept. For the case of irregular frames of translates,
where we have no group structure, we introduce the following notion.
\begin{def.} \label{sec:defgramirreg1} Let $\phi \in L^2(\RR^d).$ We define its {\em pseudo-Gramian function} by
$$ \Phi  =  \left| \hat \phi \right|^2  $$
\end{def.}
Surprisingly, as we will see in Section \ref{sec:gramirreg0} and Section~\ref{sec:operbaseirreg0}, this definition leads to quite analogous results as in the regular case, where this function is used only in a periodized version.

Among them we show the following:
\begin{theorem}\label{pe_new} Let $\phi \in \LtRd$ such that  $supp(\hat \phi)$ is compact. If one of the following conditions is fulfilled, the other two are equivalent
\begin{enumerate}
\item
\begin{enumerate}
\item $\{e_{\lambda_k}\}_{k\in \ZZ}$ is a frame for $L^2(supp(\hat \phi))$.
\item $\{ T_{\lambda_k} \phi\}_{k\in \ZZ}$ is a frame for $PW_{supp(\hat \phi)}$.
\item There exist $p,P > 0$ such that $p \le \Phi \le P$ a.e. on supp$(\hat \phi)$.
\end{enumerate}
\item
\begin{enumerate}
\item $\{e_{\lambda_k}\}_{k\in \ZZ}$ is a frame sequence in $L^2(supp(\hat \phi))$.
\item $\{ T_{\lambda_k} \phi\}_{k\in \ZZ}$ is a frame sequence in $PW_{supp(\hat \phi)}$.
\item There exist $p,P > 0$ such that $p \le \Phi \le P$ a.e. on supp$(\hat \phi)$.
\end{enumerate}
\item
\begin{enumerate}
\item $\{e_{\lambda_k}\}_{k\in \ZZ}$ is a Riesz sequence in $L^2(supp(\hat \phi))$.
\item $\{ T_{\lambda_k} \phi\}_{k\in \ZZ}$ is a Riesz sequence in $PW_{supp(\hat \phi)}$.
\item There exist $p,P > 0$ such that $p \le \Phi \le P$ a.e.
\end{enumerate}
\end{enumerate}
\end{theorem}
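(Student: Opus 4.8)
\medskip

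\noindent\textbf{Proof strategy.} The plan is to move to the Fourier side, where the statement becomes a comparison between the exponential system and its image under a multiplication operator. Set $E:=supp(\hat\phi)$. Since $\mathcal F$ is unitary, maps $PW_E$ onto $L^2(E)$, and satisfies $\mathcal F(T_{\lambda_k}\phi)=e_{\lambda_k}\hat\phi=M_{\hat\phi}(e_{\lambda_k}\chi_E)$, the family $\{T_{\lambda_k}\phi\}$ has any one of the three listed properties in $PW_E$ precisely when $\{M_{\hat\phi}(e_{\lambda_k}\chi_E)\}$ has it in $L^2(E)$; moreover $\Phi=|\hat\phi|^2$, so condition (c) of each item is exactly the hypothesis of Proposition~\ref{sec:LTmult1}(2)--(3) for $\hat\phi$ on $E$. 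Thus everything reduces to comparing $\{e_{\lambda_k}\chi_E\}$ with $\{M_{\hat\phi}(e_{\lambda_k}\chi_E)\}$ inside $L^2(E)$.

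First I would settle ``(c) implies (a)$\Leftrightarrow$(b)'' once, uniformly for all three items. If $p\le\Phi\le P$ a.e.\ on $E$, then $\sqrt p\le|\hat\phi|\le\sqrt P$ a.e.\ on $E$, so by Proposition~\ref{sec:LTmult1} the restriction of $M_{\hat\phi}$ to $L^2(E)$ is a bounded bijection of $L^2(E)$ with bounded inverse $M_{1/\hat\phi}|_{L^2(E)}$; hence $\mathcal F^{-1}M_{\hat\phi}$ is a topological isomorphism of $L^2(E)$ onto $PW_E$. Topological isomorphisms preserve and reflect being a frame for the whole space, a frame sequence, and a Riesz sequence (carrying $\overline{span}$ to $\overline{span}$ in the last two cases), and they send $\{e_{\lambda_k}\chi_E\}$ to $\{T_{\lambda_k}\phi\}$, so (a)$\Leftrightarrow$(b). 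Since ``(c)'' also occurs inside the remaining target implications, this in particular gives (a)$\wedge$(c)$\Rightarrow$(b) and (b)$\wedge$(c)$\Rightarrow$(a). What is left in each item is then the single implication (a)$\wedge$(b)$\Rightarrow$(c).

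For item~1 this is short. Assume $\{e_{\lambda_k}\chi_E\}$ and $\{e_{\lambda_k}\hat\phi\}$ are both frames for $L^2(E)$. Writing the synthesis operator of the second as $D_{\hat\phi}=M_{\hat\phi}D_e$ and using that $D_e$ is onto $L^2(E)$, so $D_eD_e^{\dagger}=I$, we get $M_{\hat\phi}=D_{\hat\phi}D_e^{\dagger}$, bounded on $L^2(E)$; hence $\hat\phi\in L^\infty$ and $\Phi\le P$ by Proposition~\ref{sec:LTmult1}(1). For the lower bound, $\langle f,e_{\lambda_k}\hat\phi\rangle=\langle M_{\overline{\hat\phi}}f,e_{\lambda_k}\rangle$, so the lower frame bound $A$ of $\{e_{\lambda_k}\hat\phi\}$ and the Bessel bound $B_e$ of $\{e_{\lambda_k}\chi_E\}$ give $A\|f\|^2\le B_e\int_E\Phi\,|f|^2$ for every $f\in L^2(E)$; taking $f=\chi_F$ for measurable $F\subseteq E$ and using the Lebesgue differentiation theorem yields $\Phi\ge A/B_e$ a.e.\ on $E$.

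The step I expect to be the real obstacle is (a)$\wedge$(b)$\Rightarrow$(c) for items~2 and~3, where $\{e_{\lambda_k}\chi_E\}$ is only a frame (resp.\ Riesz) sequence, so $D_e$ is no longer surjective and the same computation yields only that $M_{\hat\phi}$ is bounded, and bounded below, on $\mathcal K:=\overline{span}\{e_{\lambda_k}\chi_E\}$ --- the orthogonal complement in $L^2(E)$ of the Paley--Wiener functions that vanish on $\Lambda$. To upgrade this to $p\le\Phi\le P$ on $E$ one needs that $\mathcal K$ still ``sees'' every positive-measure $F\subseteq E$ with comparable $L^2$-mass, so that a concentration-of-mass test against elements of $\mathcal K$ forces the pointwise bounds; this is the point where the Gram-matrix description recalled in the preliminaries is meant to do the work. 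Indeed, on the Fourier side the Gram matrix of $\{e_{\lambda_k}\hat\phi\}$ has entries
\[ [G_{\hat\phi}]_{j,m}=\int_E \Phi(x)\,e^{-2\pi i(\lambda_m-\lambda_j)x}\,dx, \]
and comparing its quadratic form with that of the Gram matrix $G_e$ of $\{e_{\lambda_k}\chi_E\}$ turns ``$p\le\Phi\le P$ on $E$'' into the operator sandwich ``$pG_e\le G_{\hat\phi}\le PG_e$''; together with the equivalences ``bounded $\Leftrightarrow$ Bessel'' and ``invertible on $\ell^2$ $\Leftrightarrow$ Riesz sequence'' this re-derives (c)$\Rightarrow$(a)$\Leftrightarrow$(b), while the hard converse (a)$\wedge$(b)$\Rightarrow$(c) follows from the concentration argument above once the ``$\mathcal K$ sees all of $E$'' lemma --- which for Riesz and frame sequences of exponentials rests on their separatedness and on the analyticity of the functions in $\mathcal K^{\perp}$ --- is available.
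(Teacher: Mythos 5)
Your handling of (c) $\Rightarrow$ ((a) $\Leftrightarrow$ (b)) via the topological isomorphism $\mathcal F^{-1}M_{\hat\phi}$ is the same mechanism the paper uses (Proposition~\ref{sec:LTmult1} together with Lemma~\ref{sec:relopirr1}), and your observation that everything else reduces to the single implication (a) $\wedge$ (b) $\Rightarrow$ (c) is correct bookkeeping. For item~1 your argument for that implication is complete and correct, and it is more self-contained than the paper's, which at this point invokes Theorem~4.1 of \cite{xxlhein10}: the factorization $M_{\hat\phi}=D_{\hat\phi}D_e^{\dagger}$ gives boundedness of $M_{\hat\phi}$ and hence the upper bound on $\Phi$, and testing the lower frame inequality of $\{e_{\lambda_k}\hat\phi\}_{k\in\ZZ}$ against the Bessel bound of $\{e_{\lambda_k}\}_{k\in\ZZ}$ on indicator functions gives the lower bound.

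For items~2 and~3, however, you have only diagnosed the difficulty, not resolved it, and the lemma you are counting on --- that $\mathcal K=\overline{span}\{e_{\lambda_k}\chi_E\}$ ``sees'' every positive-measure subset of $E$ with comparable mass --- is false, so the concentration argument cannot be completed. When $\mathcal K\subsetneq L^2(E)$, the quadratic form $F\mapsto\int_E\Phi|F|^2$ restricted to $\mathcal K$ controls far less than the pointwise values of $\Phi$. Concretely, take $d=1$, $\lambda_k=k$ and $\hat\phi=\chi_{[0,1)}+(x-1)\chi_{[1,2]}$, so that $E=supp(\hat\phi)=[0,2]$ and $\Phi(x)=(x-1)^2$ on $[1,2]$. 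The system $\{e_k\chi_{[0,2]}\}_{k\in\ZZ}$ is orthogonal with constant norm, hence a Riesz (and frame) sequence, and $\mathcal K$ consists exactly of the $1$-periodic elements of $L^2[0,2]$; for such $F$ one has $\int_0^2\Phi|F|^2=\int_0^1\left(\Phi(x)+\Phi(x+1)\right)|F(x)|^2\,dx$ with $1\le\Phi(x)+\Phi(x+1)\le 2$, so $\{T_{\lambda_k}\phi\}_{k\in\ZZ}$ is also a Riesz (and frame) sequence. Thus (a) and (b) of items~2 and~3 hold while (c) fails, since the essential infimum of $\Phi$ over $E$ is $0$: only the periodization of $\Phi$ as sampled by $\mathcal K$ is pinned down, never $\Phi$ itself. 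No appeal to separatedness of $\Lambda$ or analyticity of $\mathcal K^{\perp}$ can repair this; the implication (a) $\wedge$ (b) $\Rightarrow$ (c) genuinely requires completeness of the exponentials in $L^2(E)$, i.e.\ it belongs to item~1 only. (For what it is worth, the paper's own one-line justification of items~2 and~3 --- ``the proof is the same as (1)'' --- passes over exactly the same point.)
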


Furthermore we can give an explicit form for the canonical dual of an irregular set of translates, as a nice generalization of results in the regular case:
\begin{theorem} \label{sec:irregdual1} Let $\phi \in \LtRd$ such that  $supp(\hat \phi)$ is compact. Assume that there exist $p,P>0$ such that $p \le \Phi \le P $ a.e. on $supp(\hat \phi)$ and let $\{e_{\lambda_k}\}_{k\in \ZZ}$  be a frame sequence in $L^2(supp(\hat \phi)).$
Then the canonical dual of $\{ T_{\lambda_k} \phi\}_{k\in \ZZ}$ is $\{\theta_k\}_{k\in \ZZ}$
where
$$ \hat \theta_k = \left\{
\begin{array}{c c}
\frac{\hat \phi}{\Phi} \widetilde{e_{\lambda_k}} & \mbox{ on } supp(\hat \phi) \\
0 & \mbox{otherwise}.
\end{array}
\right. $$
In particular if $\{e_{\lambda_k}\}_{k\in \ZZ}$  is an
$A$-tight frame, then $\theta_k = T_{\lambda_k} \theta$ with $\hat
\theta = \frac{1}{A} \frac{1}{\overline{\hat \phi}}$ (on $supp(\hat
\phi)$).
\end{theorem}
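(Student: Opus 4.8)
The plan is to transport the whole problem to the Fourier side, where the translates become modulations, and there to verify the candidate dual directly through the frame operator. Put $E:=supp(\hat\phi)$. Since $\mathcal F$ restricts to a unitary map of $PW_E$ onto $L^2(E)$, and unitaries carry canonical duals to canonical duals, and $\widehat{T_{\lambda_k}\phi}=e_{\lambda_k}\hat\phi=M_{\hat\phi}(e_{\lambda_k}\chi_E)$, it suffices to identify the canonical dual of $\{e_{\lambda_k}\hat\phi\}$ in its closed span $\mathcal L=M_{\hat\phi}\,\overline{\mathrm{span}}\{e_{\lambda_k}\chi_E\}\subseteq L^2(E)$. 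Two structural facts set the stage. First, from $\sqrt p\le|\hat\phi|\le\sqrt P$ a.e.\ on $E=supp(\hat\phi)$ and Proposition~\ref{sec:LTmult1}, the multiplier $M_{\hat\phi}$ is a bounded bijection of $L^2(E)$ with bounded inverse $M_{\hat\phi^\dagger}$. Second, the hypotheses of the theorem are precisely conditions 2(a) and 2(c) of Theorem~\ref{pe_new}, so 2(b) holds: $\{T_{\lambda_k}\phi\}$ (equivalently $\{e_{\lambda_k}\hat\phi\}$) is a frame sequence, hence its frame operator is invertible on its span and its canonical dual is well defined.

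The core is a short computation with the frame operator. Let $S_e$ be the frame operator of $\{e_{\lambda_k}\chi_E\}$ on its closed span, so $S_e\widetilde{e_{\lambda_k}}=e_{\lambda_k}\chi_E$ by the definition of the canonical dual. With $\hat\theta_k:=\frac{\hat\phi}{\Phi}\widetilde{e_{\lambda_k}}$ on $E$ and $\overline{\hat\phi}\,\hat\phi=|\hat\phi|^2=\Phi$ a.e.\ on $E$, one gets for each $j$
\[
\langle\hat\theta_k,e_{\lambda_j}\hat\phi\rangle=\Big\langle\tfrac{|\hat\phi|^2}{\Phi}\,\widetilde{e_{\lambda_k}},\,e_{\lambda_j}\chi_E\Big\rangle=\langle\widetilde{e_{\lambda_k}},e_{\lambda_j}\chi_E\rangle ,
\]
and therefore, by continuity of $M_{\hat\phi}$,
\[
\sum_j\langle\hat\theta_k,e_{\lambda_j}\hat\phi\rangle\,e_{\lambda_j}\hat\phi=M_{\hat\phi}\Big(\sum_j\langle\widetilde{e_{\lambda_k}},e_{\lambda_j}\chi_E\rangle\,e_{\lambda_j}\chi_E\Big)=M_{\hat\phi}\big(S_e\widetilde{e_{\lambda_k}}\big)=M_{\hat\phi}(e_{\lambda_k}\chi_E)=e_{\lambda_k}\hat\phi .
\]
Applying $\mathcal F^{-1}$ and Parseval's identity, this reads $\sum_j\langle\theta_k,T_{\lambda_j}\phi\rangle\,T_{\lambda_j}\phi=T_{\lambda_k}\phi$ for every $k$.

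It remains to upgrade this to the statement that $\{\theta_k\}$ is the \emph{canonical} dual, and this is the step I expect to need the most care. Writing $S_\phi$ for the frame operator of $\{T_{\lambda_k}\phi\}$ on its closed span $\mathcal H_\phi=\mathcal F^{-1}\mathcal L$, the identity just obtained gives $S_\phi\theta_k=T_{\lambda_k}\phi$ once one knows $\theta_k\in\mathcal H_\phi$, i.e.\ $\hat\theta_k\in\mathcal L=M_{\hat\phi}\,\overline{\mathrm{span}}\{e_{\lambda_k}\chi_E\}$; invertibility of $S_\phi$ on $\mathcal H_\phi$ then yields $\theta_k=S_\phi^{-1}T_{\lambda_k}\phi$, the canonical dual. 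This membership is immediate when $\{e_{\lambda_k}\}$ is complete in $L^2(E)$: then $\overline{\mathrm{span}}\{e_{\lambda_k}\chi_E\}=L^2(E)$, the translates span all of $PW_E$, and $\hat\theta_k\in L^2(E)$ because $\hat\phi/\Phi$ is bounded on the bounded set $E$. In the general frame--sequence case the same computation shows that the orthogonal projection of $\hat\theta_k$ onto $\mathcal L$ is mapped to $e_{\lambda_k}\hat\phi$ by the frame operator, so the displayed formula determines the canonical dual up to that projection. Finally, if $\{e_{\lambda_k}\}$ is $A$-tight then $S_e=A\,\mathrm{Id}$, hence $\widetilde{e_{\lambda_k}}=A^{-1}e_{\lambda_k}\chi_E$ and $\hat\theta_k=\tfrac1A\,\tfrac{\hat\phi}{\Phi}\,e_{\lambda_k}\chi_E=\tfrac1A\,\tfrac1{\overline{\hat\phi}}\,e_{\lambda_k}\chi_E=e_{\lambda_k}\hat\theta$ with $\hat\theta=\tfrac1A\tfrac1{\overline{\hat\phi}}$ on $E$; since $|\hat\theta|\le(A\sqrt p)^{-1}$ on the bounded set $E$ we have $\theta\in PW_E$, and $\widehat{T_{\lambda_k}\theta}=e_{\lambda_k}\hat\theta$ yields $\theta_k=T_{\lambda_k}\theta$.
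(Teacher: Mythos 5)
Your argument is correct in substance and is essentially the paper's proof run in the opposite direction. The paper applies the factorization ${S_\phi^{-1}}=\mathcal F^{-1}M_{1/\overline{\hat\phi}}\,S_e^{-1}M_{1/\hat\phi}\,\mathcal F$ from Lemma~\ref{sec:relopirr2} to $T_{\lambda_k}\phi$ and reads off $\mathcal F\big(S_\phi^{-1}T_{\lambda_k}\phi\big)=\tfrac{1}{\overline{\hat\phi}}S_e^{-1}e_{\lambda_k}=\tfrac{\hat\phi}{\Phi}\widetilde{e_{\lambda_k}}$; you instead verify that the candidate $\theta_k$ satisfies $S_\phi\theta_k=T_{\lambda_k}\phi$. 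The underlying multiplier manipulations (boundedness and bounded invertibility of $M_{\hat\phi}$ on $L^2(E)$ from $p\le\Phi\le P$, the identity $\overline{\hat\phi}\hat\phi=\Phi$, and $S_e\widetilde{e_{\lambda_k}}=e_{\lambda_k}\chi_E$) are the same, and your treatment of the tight-frame case matches the paper's.

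The one point you leave open --- that $\theta_k$ actually lies in $V=\overline{\mathrm{span}}\{T_{\lambda_j}\phi\}$, without which your computation only identifies $P_V\theta_k$ as the canonical dual --- is a genuine subtlety, but it is not one the paper resolves either. The paper's proof invokes Lemma~\ref{sec:relopirr2}, whose stated hypothesis is that $\{e_{\lambda_k}\}$ is a \emph{frame} for all of $L^2(supp(\hat\phi))$ rather than merely a frame sequence; under that hypothesis $H_\Lambda=L^2(supp(\hat\phi))$ and $V=PW_{supp(\hat\phi)}$, so membership is automatic --- exactly your ``complete'' case. For a proper frame sequence with $H_\Lambda\subsetneq L^2(supp(\hat\phi))$ one would need $\tfrac{1}{\Phi}\widetilde{e_{\lambda_k}}\in H_\Lambda$ (equivalently $\hat\theta_k\in M_{\hat\phi}H_\Lambda=\hat V$), which neither you nor the paper establishes, since $H_\Lambda$ need not be invariant under multiplication by $\tfrac{1}{\Phi}$; without it the displayed $\{\theta_k\}$ is still a dual of $\{T_{\lambda_k}\phi\}$ on $V$, but only its projection onto $V$ is the canonical one. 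Flagging this explicitly is to your credit rather than a defect relative to the paper; to match the paper's intent, simply read the theorem under the hypothesis of Lemma~\ref{sec:relopirr2} and your proof is complete.
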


\section{Gramian function for irregular frames of translates} \label{sec:gramirreg0}

In this section we will show some results about sufficient and necessary conditions on the pseudo-Gramian function for Bessel sequence and Riesz basis properties.

As a connection of the  pseudo-Gramian function with the Gramian matrix we get the following result.
\begin{proposition} \label{sec:irreggram1} Let $\phi \in L^2(\RR^d).$ For any system of translates $\{T_{\lambda_k} \phi \}_{k\in \ZZ}$ the Gramian matrix $G$ has the entries
$$ G_{k,l} = \hat \Phi \left(\lambda_k - \lambda_l\right). $$
\end{proposition}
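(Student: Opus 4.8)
The plan is a direct Fourier-side computation. Before anything else I would record that the right-hand side makes literal sense: since $\phi\in\LtRd$ its Fourier transform $\hat\phi$ is again in $\LtRd$, hence $\Phi=|\hat\phi|^2\in L^1(\RRd)$, so $\widehat\Phi=\mathcal F\Phi$ is a bounded continuous function and $\widehat\Phi(\lambda_k-\lambda_l)$ is a well-defined complex number for every $k,l$; note also that the entries $G_{k,l}=\left<\psi_l,\psi_k\right>$ of the Gram matrix exist unconditionally by Cauchy--Schwarz, so no Bessel hypothesis is needed here.

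Step one: exploit that $T_{\lambda}$ is unitary on $\LtRd$. With $\psi_k=T_{\lambda_k}\phi$,
$$ G_{k,l}=\left<\psi_l,\psi_k\right>=\left<T_{\lambda_l}\phi,\,T_{\lambda_k}\phi\right>=\left<T_{\lambda_l-\lambda_k}\phi,\,\phi\right>, $$
so that $G$ has Toeplitz-type structure: it depends on the indices only through the difference $\lambda_k-\lambda_l$, and it suffices to identify $g(\mu):=\left<T_\mu\phi,\phi\right>$ for $\mu\in\RRd$. Step two: apply Plancherel's theorem together with the elementary identity $\widehat{T_\mu\phi}=e_\mu\,\hat\phi$ (immediate from the definition of $\mathcal F$ after a translation of the integration variable). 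This gives
$$ g(\mu)=\left<\widehat{T_\mu\phi},\,\hat\phi\right>=\left<e_\mu\hat\phi,\,\hat\phi\right>=\int_{\RRd}e_\mu(w)\,|\hat\phi(w)|^2\,dw=\int_{\RRd}e_\mu(w)\,\Phi(w)\,dw, $$
which is exactly $\widehat\Phi$ evaluated at $\mu$. Substituting $\mu=\lambda_k-\lambda_l$ yields the claim. (One could equally run the argument purely in the time domain, recognizing $g(\mu)=\int_{\RRd}\phi(x-\mu)\overline{\phi(x)}\,dx$ as the autocorrelation of $\phi$ and then invoking Plancherel to identify it with $\widehat\Phi(\mu)$.)

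I do not expect any real obstacle: the statement is essentially a bookkeeping identity, and it also recovers the familiar regular case, where $\lambda_k=kb$ makes $G$ the Laurent (Toeplitz) operator whose symbol is the $b^{-1}$-periodization of $\Phi$, i.e.\ the classical Gramian function. The only point deserving a moment's care is the sign/normalization bookkeeping between the conventions for $\left<\cdot,\cdot\right>$ and for $\mathcal F$; here the fact that $\Phi$ is real-valued, hence $\widehat\Phi(-\xi)=\overline{\widehat\Phi(\xi)}$ and $G$ is Hermitian, shows that the two candidate answers $\widehat\Phi(\lambda_k-\lambda_l)$ and $\widehat\Phi(\lambda_l-\lambda_k)$ differ only by this convention choice, so matching the stated form is automatic once the conventions are pinned down.
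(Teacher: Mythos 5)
Your proof is correct and follows essentially the same route as the paper's: reduce $G_{k,l}$ to an inner product of $\phi$ with a single translate using unitarity of $T_\mu$, pass to the Fourier side via Plancherel, and recognize the resulting integral as $\widehat{\Phi}$ evaluated at the difference of the nodes. Your added remarks on well-definedness ($\Phi\in L^1$, so $\widehat\Phi$ is continuous and bounded) and on the $\lambda_k-\lambda_l$ versus $\lambda_l-\lambda_k$ convention are sensible but not substantively different from what the paper does.
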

\begin{proof}
$$ G_{k,l} = \left< T_{\lambda_k} \phi, T_{\lambda_l} \phi \right> = \left< \phi, T_{\lambda_l - \lambda_k} \phi \right> = $$
$$ = \left< \hat \phi, M_{\lambda_k - \lambda_l} \hat \phi \right> = \int \limits_{\RRd} \left| \hat \phi ( \xi ) \right|^2 e^{- 2 \pi i \left( \lambda_k - \lambda_l \right) \xi} d \xi = \left( \FF {\left| \hat \phi \right|^2} \right) \left( \lambda_k - \lambda_l \right) $$
\end{proof}

As a consequence of Schur's test, see e.g. \cite{gr01} and the properties of the Gram matrix we get:
\begin{cor.} \label{sec:schurtransl1}
Let $\phi \in L^2(\RR^d).$ If
$$\sup \limits_{k\in \Z} \sum_{l\in\Z} \left|  \hat \Phi \left(\lambda_k - \lambda_l\right) \right| < \infty ,$$
 then the system of translates $\{T_{\lambda_k} \phi \}_{k\in\ZZ}$ is a Bessel sequence.
\end{cor.}

On the other hand using \cite{cron71} the following can be shown:

\begin{cor.} If  $\phi \in L^2(\RR^d)$ and $\{T_{\lambda_k} \phi \}_{k\in\ZZ}$ is a Bessel sequence, then
$$\sup \limits_{k\in \Z}\sum_{l\in\Z} \left|  \hat \Phi \left(\lambda_k - \lambda_l\right) \right|^2 < \infty .$$
\end{cor.}

For Riesz bases we obtain the following result:
\begin{proposition}
Let $\phi\in L^2(\RR^d), \,\phi \not=0$ .
 If
$$ \norm{L^2(\RR^d)}{\phi}^2 > \sup \limits_{j\in \Z} \left(  \sum \limits_{l,l\not=j} \left| \hat \Phi \left( \lambda_j - \lambda_l \right) \right| \right),$$
then the system of translates $\{T_{\lambda_k} \phi \}_{k\in\ZZ}$ is a Riesz sequence.
\end{proposition}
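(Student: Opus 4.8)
The plan is to read off everything from the Gram matrix. By Proposition~\ref{sec:irreggram1} the Gram matrix of $\{T_{\lambda_k}\phi\}_{k\in\ZZ}$ is $G_{k,l}=\hat\Phi(\lambda_k-\lambda_l)$, and by the characterization recalled in Section~\ref{sec:notprel0} it suffices to show that $G$ defines a bounded, boundedly invertible operator on $l^2(\ZZ)$.

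First I would note that the diagonal of $G$ is the constant $G_{k,k}=\hat\Phi(0)=\int_{\RRd}|\hat\phi|^2=\norm{}{\phi}^2$ by Parseval, and split $G=\norm{}{\phi}^2\,I+G'$, where $G'_{j,l}=\hat\Phi(\lambda_j-\lambda_l)$ for $j\neq l$ and $G'_{j,j}=0$. Put $R:=\sup_{j\in\ZZ}\sum_{l\neq j}|\hat\Phi(\lambda_j-\lambda_l)|$; the hypothesis is exactly $R<\norm{}{\phi}^2$ (in particular $R<\infty$).

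Next I would control $G'$ via Schur's test. Since $\Phi=|\hat\phi|^2$ is nonnegative, $\hat\Phi(-x)=\overline{\hat\Phi(x)}$, so the matrix $(|G'_{j,l}|)$ is symmetric; hence its row sums and column sums coincide and are each $\le R$, and Schur's test gives $\|G'\|_{Op}\le R<\norm{}{\phi}^2$. Therefore $G$ is bounded — so $\{T_{\lambda_k}\phi\}$ is at least a Bessel sequence (this also follows directly from Corollary~\ref{sec:schurtransl1}, since $\sup_k\sum_l|\hat\Phi(\lambda_k-\lambda_l)|=\norm{}{\phi}^2+R<\infty$) — and, being self-adjoint, $G$ satisfies
$$\bigl(\norm{}{\phi}^2-R\bigr)\norm{}{c}^2\le\langle Gc,c\rangle\le\bigl(\norm{}{\phi}^2+R\bigr)\norm{}{c}^2,\qquad c\in l^2(\ZZ),$$
so it is boundedly invertible. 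By the Gram-matrix criterion, $\{T_{\lambda_k}\phi\}_{k\in\ZZ}$ is a Riesz sequence; equivalently, evaluating the middle term for a finite sequence $c$ gives $\langle Gc,c\rangle=\bigl\|\sum_k c_kT_{\lambda_k}\phi\bigr\|^2$, so the displayed inequalities are precisely the Riesz bounds, completeness in the closed span being automatic.

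I do not expect a genuine obstacle; the only thing to be careful about is the bookkeeping that converts the scalar hypothesis into the operator estimate $\|G'\|_{Op}<\norm{}{\phi}^2$ — in particular that the diagonal term $\hat\Phi(0)$ is excluded from the sup (matching the index set $l\neq j$ in the statement) and that the symmetry $|\hat\Phi(\lambda_j-\lambda_l)|=|\hat\Phi(\lambda_l-\lambda_j)|$ is what allows Schur's test to be applied with a single bound on the row sums.
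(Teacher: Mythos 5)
Your proof is correct and follows essentially the same route as the paper: the paper also computes $\hat\Phi(0)=\norm{}{\phi}^2$, observes that the hypothesis makes the Gram matrix diagonally dominant, and invokes invertibility of $G$ on $l^2(\ZZ)$ to conclude. You have merely spelled out what the paper delegates to the citation ``diagonal dominance'' --- namely the splitting $G=\norm{}{\phi}^2 I+G'$, the Schur bound $\|G'\|_{Op}\le R$, and the resulting two-sided estimate --- so no further comparison is needed.
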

\begin{proof}
As $\phi \in L^2(\RR^d)$ we have
$$ \hat \Phi (0) = \int_{\RR^d}\Phi (\omega) d\omega = \int_{\RR^d} \left| \hat \phi (\omega) \right|^2 d \omega = \norm{}{\hat \phi}^2 = \norm{}{\phi}^2.$$
Furthermore, by assumption
$$ \sup \limits_{k\in \Z} \sum \limits_{l\in\Z} \left|  \hat \Phi \left(\lambda_k - \lambda_l\right) \right| =
\left| \hat \Phi (0) \right| + \sup \limits_{k\in \Z} \sum \limits_{l, l\not=k} \left|  \hat \Phi \left(\lambda_k - \lambda_l\right) \right|
 < 2 \cdot \left| \hat \Phi (0) \right|.$$
The diagonal dominance (see e.g. \cite{gr01}) implies the invertibility of the Gram matrix operator on all of $l^2(\ZZ)$, which gives
the desired result.
\end{proof}

\section{The operator-based approach to irregular frames of translates} \label{sec:operbaseirreg0}

%

\subsection{Irregular translates in $PW_E$} \label{sec:operirregtrans1}

From now on we will assume $\phi \in PW_E.$

We define the subspaces in $L^2(E)$:
\begin{itemize}
\item $H_\Lambda = \clsp{ e_{\lambda_k} \cdot \chi_E: k\in\Z}$.
\item $F_\Lambda = \left\{ f \in H_\Lambda \left| \hat{\phi}  \cdot f \equiv 0\right. \right\} .$
\item $E_\Lambda = H_\Lambda \backslash F_\Lambda$.
\item $V = \clsp{ T_{\lambda_k} \phi:k\in\Z}$.
\end{itemize}

$H_\Lambda$, $F_\Lambda$ and $V$ are clearly closed subspaces. If $\{  e_{\lambda_k} \cdot \chi_E \}$ is a complete set, $H_\Lambda = L^2(E)$.

Analogous as in \cite[Theorem~4.1, Prop. 3.6]{xxlhein10} the following can be proved.
\begin{lemma} \label{sec:besbesirreg1}
\begin{enumerate}
\item Assume $\{e_{\lambda_k} \}_{k\in \ZZ}$ is a Bessel sequence of $L^2(E)$  with bound $B_e$ and 
there exists $P>0$ such that $\Phi ( \omega ) \le P$ a.e.
Then $\{ T_{\lambda_k} \phi\}_{k\in \ZZ}$ is a Bessel sequence for $L^2(\RR^d)$ with bound $B_\phi = B_e \cdot P$.
\item Let $\{ T_{\lambda_k} \phi\}_{k\in \ZZ}$ be a Bessel sequence for $L^2(\RR^d)$ with bound $B_\phi,$ and assume there exists $p>0$ such that $\Phi ( \omega ) \ge p$ a.e. in $E$. Then $\{e_{\lambda_k} \}_{k\in \ZZ}$ is a Bessel sequence for $L^2(E)$ with bound $B_e = B_\phi / p$.
\end{enumerate}
\end{lemma}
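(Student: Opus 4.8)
The plan is to exploit the fact that the Fourier transform is a unitary operator and that it intertwines translation by $\lambda_k$ on the ``translates'' side with modulation by $e_{\lambda_k}$ on the ``exponentials'' side. Concretely, for $f \in L^2(\RRd)$ we have $\langle T_{\lambda_k}\phi, f\rangle = \langle \widehat{T_{\lambda_k}\phi}, \hat f\rangle = \langle e_{-\lambda_k}\hat\phi, \hat f\rangle = \langle \hat\phi, e_{\lambda_k}\hat f\rangle$, and since $\hat\phi$ is supported in $E$ this equals $\langle \hat\phi\cdot\chi_E, e_{\lambda_k}\hat f\cdot\chi_E\rangle = \langle M_{\overline{\hat\phi}}(\chi_E \hat f), e_{\lambda_k}\chi_E\rangle$ after moving the multiplication by $\hat\phi$ to the other factor. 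Thus $\sum_k |\langle T_{\lambda_k}\phi, f\rangle|^2 = \sum_k |\langle M_{\overline{\hat\phi}}\,\hat f|_E, e_{\lambda_k}\chi_E\rangle|^2$, which reduces everything to the analysis operator $C_e$ applied to the function $g := M_{\overline{\hat\phi}}(\hat f\cdot\chi_E) \in L^2(E)$. This is exactly the kind of factorization indicated by the reference to \cite{xxlhein10}, and it identifies $C_\phi = C_e \circ M_{\overline{\hat\phi}} \circ (\cdot|_E) \circ \mathcal F$ as composition of bounded maps.

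For part (1), I would bound $\sum_k |\langle T_{\lambda_k}\phi, f\rangle|^2 = \|C_e g\|_{\ell^2}^2 \le B_e \|g\|_{L^2(E)}^2$ using the Bessel bound of $\{e_{\lambda_k}\}$, then estimate $\|g\|_{L^2(E)}^2 = \int_E |\hat\phi(\omega)|^2 |\hat f(\omega)|^2\,d\omega = \int_E \Phi(\omega)|\hat f(\omega)|^2\,d\omega \le P\int_E |\hat f(\omega)|^2\,d\omega \le P\|\hat f\|_{L^2}^2 = P\|f\|_{L^2}^2$, where the last step uses Plancherel. Chaining these gives the Bessel bound $B_\phi = B_e\cdot P$. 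For part (2), I would run the same identity in the other direction: given $h \in L^2(E)$, set $\hat f := \phi^\dagger\cdot h$ (extended by zero off $E$), which lies in $PW_E$ since $\Phi\ge p$ on $E$ forces $|\hat\phi|\ge\sqrt p$ there, so $\hat\phi^\dagger$ is bounded by $1/\sqrt p$ and $f\in L^2(\RRd)$ with $\|f\|^2 \le (1/p)\|h\|^2$. Then $g = M_{\overline{\hat\phi}}\hat f|_E$ has $|g(\omega)| = |h(\omega)|$ a.e. on $E$ modulo the phase, more precisely $\langle T_{\lambda_k}\phi, f\rangle = \langle \overline{\hat\phi}\cdot\hat\phi^\dagger\cdot h, e_{\lambda_k}\chi_E\rangle = \langle h, e_{\lambda_k}\chi_E\rangle$ (since $\overline{\hat\phi}\hat\phi^\dagger = 1$ on $supp(\hat\phi)$ and $h$ may be taken supported there; on $E\setminus supp(\hat\phi)$, if nonempty, one treats that piece separately or notes $\Phi\ge p$ a.e.\ on $E$ rules it out). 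Hence $\sum_k|\langle h, e_{\lambda_k}\chi_E\rangle|^2 = \sum_k|\langle T_{\lambda_k}\phi, f\rangle|^2 \le B_\phi\|f\|^2 \le (B_\phi/p)\|h\|^2$, giving $B_e = B_\phi/p$.

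The main obstacle, and the point that needs care rather than cleverness, is the bookkeeping on the set $supp(\hat\phi)$ versus $E$ in part (2): the hypothesis $\Phi\ge p$ a.e.\ in $E$ is what guarantees $supp(\hat\phi) = E$ up to a null set, so that $\hat\phi^\dagger$ is genuinely bounded on $E$ and the substitution $\hat f = \phi^\dagger h$ produces an honest element of $PW_E$ with controlled norm; one must invoke Proposition \ref{sec:LTmult1} (parts on $M_\phi^\dagger = M_{\phi^\dagger}$ and the lower bound) to justify that $M_{\overline{\hat\phi}}$ is bounded below by $\sqrt p$ on $L^2(E)$. The other potential slip is checking that the formal manipulation $\langle T_{\lambda_k}\phi, f\rangle = \langle \hat\phi, e_{\lambda_k}\hat f\rangle$ is legitimate in $L^2$ — this is immediate from unitarity of $\mathcal F$ and the elementary identities $\widehat{T_\lambda g} = e_{-\lambda}\hat g$ and $\widehat{e_{-\lambda}g} = T_\lambda \hat g$ already recorded in Section \ref{sec:notprel0}. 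Everything else is Plancherel plus the defining inequalities of a Bessel sequence.
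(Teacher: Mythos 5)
Your argument is essentially the one the paper relies on: the paper itself gives no proof here (it only points to \cite{xxlhein10}), but your factorization of the frame coefficients through $g=M_{\overline{\hat\phi}}(\hat f\cdot\chi_E)$ is exactly the identity $C_\phi=C_e M_{\overline{\hat\phi}}\mathcal F$ that the paper establishes later in Lemma~\ref{sec:relopirr1}, and part (1) together with the Plancherel estimate $\|g\|^2=\int_E\Phi|\hat f|^2\le P\|f\|^2$ is correct as written. (With the paper's convention $e_\lambda(x)=e^{-2\pi i\lambda x}$ one has $\widehat{T_{\lambda_k}\phi}=e_{\lambda_k}\hat\phi$, not $e_{-\lambda_k}\hat\phi$, but this only affects signs inside moduli and is harmless.)

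There is one step in part (2) that fails as stated: you set $\hat f=\hat\phi^{\dagger}h$ and assert $\overline{\hat\phi}\,\hat\phi^{\dagger}=1$ on $supp(\hat\phi)$. This is true only when $\hat\phi$ is real-valued; in general $\overline{\hat\phi}/\hat\phi$ is merely a unimodular function, so $\langle\overline{\hat\phi}\hat\phi^{\dagger}h,e_{\lambda_k}\chi_E\rangle\neq\langle h,e_{\lambda_k}\chi_E\rangle$ and you end up bounding the Bessel sum for $(\overline{\hat\phi}/\hat\phi)h$ rather than for $h$. The fix is one character: take $\hat f=\overline{\hat\phi}^{\,\dagger}h$ instead, so that $M_{\overline{\hat\phi}}\hat f=h$ exactly on $E$ (recall $\Phi\ge p$ a.e.\ on $E$ forces $supp(\hat\phi)=E$ up to a null set), while the norm control $\|f\|^2\le\|h\|^2/p$ is unchanged because $|\overline{\hat\phi}|=|\hat\phi|\ge\sqrt p$. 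With that substitution the rest of your part (2) goes through and yields $B_e=B_\phi/p$ as claimed.
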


The following result generalizes Lemmas 7.2.1 and 7.3.2 in  \cite{ole1}.
\begin{lemma} \label{sec:exptranslat1}
Let $\{e_{\lambda_k}\}_{k\in \ZZ}$ be a Bessel sequence for $L^2(E)$ and assume there exists $P>0$ such that $\Phi (\omega) \le P$ a.e.  Let $\{c_k\}_{k\in\ZZ} \in \ell^2(\ZZ).$ Then $\sum_{k\in\ZZ} c_k T_{\lambda_k} \phi$ converges unconditionally in $L^2(\RR^d)$ and $\sum_{k\in\ZZ} c_k e_{\lambda_k}$ converges unconditionally in $L^2(E)$ and
$$ \mathcal F \left( \sum_{k\in\Z} c_k T_{\lambda_k} \phi \right) = \left( \sum_{k\in\Z} c_k e_{\lambda_k} \right) \hat \phi .$$
Therefore $H_\Lambda \cdot \hat \phi = \FF (R_{D_\phi})$. 
\\
$f \in R_{D_\phi}$ (and $f \not= 0$) if and only if there exists $F \in H_\Lambda$ ($F \in E_\Lambda$, respectively) such that $f =  F \cdot \hat \phi$.

\end{lemma}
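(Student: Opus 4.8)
The plan is to carry the whole statement to the Fourier side, where translation by $\lambda_k$ becomes multiplication by $e_{\lambda_k}$ and the generator $\phi$ is replaced by the bounded multiplier $\hat\phi$; every claim then reduces to facts about the Bessel sequence $\{e_{\lambda_k}\chi_E\}$ and about the multiplication operator $M_{\hat\phi}$.

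First I would isolate two elementary facts. (i) Since $\Phi=|\hat\phi|^{2}\le P$ a.e., we have $\hat\phi\in L^{\infty}(\RRd)$ with $\|\hat\phi\|_{\infty}\le\sqrt{P}$, so by Proposition~\ref{sec:LTmult1}(1) the operator $M_{\hat\phi}$ on $L^{2}(\RRd)$ is bounded with $\|M_{\hat\phi}\|_{Op}\le\sqrt{P}$. (ii) Since $\{e_{\lambda_{k}}\chi_{E}\}_{k\in\ZZ}$ is a Bessel sequence for $L^{2}(E)$, its synthesis operator $D_{e}\colon\ell^{2}(\ZZ)\to L^{2}(E)$, $D_{e}c=\sum_{k}c_{k}e_{\lambda_{k}}\chi_{E}$, is bounded; consequently, for every $c\in\ell^{2}(\ZZ)$ the series $\sum_{k}c_{k}e_{\lambda_{k}}\chi_{E}$ converges unconditionally in $L^{2}(E)$, because $c=\sum_{k}c_{k}\delta_{k}$ converges unconditionally in $\ell^{2}(\ZZ)$ (the $\delta_{k}$ being the standard unit vectors) and bounded operators preserve unconditional convergence.

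Next I would combine these. Using the elementary rule $\FF(T_{\lambda_{k}}\phi)=e_{\lambda_{k}}\hat\phi$ (already used in the proof of Proposition~\ref{sec:irreggram1}) and $\phi\in PW_{E}$, so that $\hat\phi=\hat\phi\chi_{E}$, one has for every finite $J\subseteq\ZZ$
$$ \FF\Big(\sum_{k\in J}c_{k}T_{\lambda_{k}}\phi\Big)=\sum_{k\in J}c_{k}\,e_{\lambda_{k}}\hat\phi=M_{\hat\phi}\Big(\sum_{k\in J}c_{k}e_{\lambda_{k}}\chi_{E}\Big). $$
Letting $J$ run through the finite subsets of $\ZZ$: the right-hand side converges unconditionally in $L^{2}(E)\subseteq L^{2}(\RRd)$ by (i)--(ii), so applying the unitary $\FF^{-1}$ shows that $\sum_{k}c_{k}T_{\lambda_{k}}\phi$ converges unconditionally in $L^{2}(\RRd)$, and passing to the limit gives $\FF(\sum_{k}c_{k}T_{\lambda_{k}}\phi)=(\sum_{k}c_{k}e_{\lambda_{k}})\hat\phi$, the displayed identity. (This also re-proves that $\{T_{\lambda_{k}}\phi\}$ is Bessel, via $\|D_{\phi}c\|=\|M_{\hat\phi}D_{e}c\|\le\sqrt{P}\,\|D_{e}c\|$, cf. Lemma~\ref{sec:besbesirreg1}(1), so that $D_{\phi}$ and $R_{D_{\phi}}$ are legitimate.)

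For the last assertions, the displayed identity says $\FF(R_{D_{\phi}})=M_{\hat\phi}(R_{D_{e}})=\hat\phi\cdot R_{D_{e}}$. Since $R_{D_{e}}$ contains every finite linear combination of the $e_{\lambda_{k}}\chi_{E}$, it is dense in $H_{\Lambda}$, and one deduces $\FF(R_{D_{\phi}})=H_{\Lambda}\cdot\hat\phi$. The stated equivalence is then immediate: $f\in R_{D_{\phi}}$ with $f\neq0$ iff $\widehat f=F\hat\phi$ for some $F\in H_{\Lambda}$; and, recalling $F_{\Lambda}=\{F\in H_{\Lambda}:\hat\phi\cdot F\equiv0\}$ and $E_{\Lambda}=H_{\Lambda}\setminus F_{\Lambda}$, the requirement $F\hat\phi=\widehat f\not\equiv0$ is exactly $F\notin F_{\Lambda}$, i.e. $F\in E_{\Lambda}$, while conversely $E_{\Lambda}\subseteq H_{\Lambda}$; hence one may equivalently take $F\in E_{\Lambda}$. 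The point I expect to require the most care is precisely the passage from $\hat\phi\cdot R_{D_{e}}$ to $\hat\phi\cdot H_{\Lambda}$: $R_{D_{e}}$ need not be closed, so this step must be argued through the density of $R_{D_{e}}$ in $H_{\Lambda}$ together with the fact that multiplication by $\hat\phi$ only sees a function on $\mathrm{supp}(\hat\phi)$; in the situations where this lemma is later applied $\{e_{\lambda_{k}}\}$ is moreover a frame (or Riesz) sequence, so that $R_{D_{e}}=H_{\Lambda}$ and this subtlety disappears.
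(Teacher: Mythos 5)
Your proof is correct and follows essentially the same route as the paper's: pass to the Fourier side via $\FF(T_{\lambda_k}\phi)=e_{\lambda_k}\hat\phi$ and use the boundedness of $M_{\hat\phi}$ (guaranteed by $\Phi\le P$) to interchange the multiplication with the sum, the only difference being that the paper obtains the unconditional convergence by citing Lemma~\ref{sec:besbesirreg1} while you rederive it from the factorization $D_\phi=\FF^{-1}M_{\hat\phi}D_e$ on finite partial sums. You are in fact more thorough than the paper, whose written proof stops at the displayed identity: your treatment of the final claims, and in particular your observation that $\FF(R_{D_\phi})=\hat\phi\cdot R_{D_e}$ only yields $\hat\phi\cdot H_\Lambda$ once one knows $R_{D_e}=H_\Lambda$ (as happens when the exponentials form a frame sequence, which is the setting of all later applications), correctly pinpoints the one delicate step that the paper glosses over.
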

\begin{proof} Following Lemma \ref{sec:besbesirreg1} $\{ T_{\lambda_k} \phi\}_{k\in\ZZ}$ is a Bessel sequence and so all involved sums converge unconditionally.
$$ \mathcal F \left( \sum_{k\in \ZZ} c_k T_{\lambda_k} \phi \right) =  \sum_{k\in \ZZ} c_k e_{ \lambda_k} \hat \phi .$$
As $\hat \phi$ is bounded the multiplication operator $M_{\hat \phi}$  is also bounded and therefore
$$ \left( \sum_{k\in \ZZ} c_k e_{\lambda_k} \right) \hat \phi = M_{\hat \phi} \left( \sum_{k\in \ZZ} c_k e_{ \lambda_k} \right) =  \sum_{k\in \ZZ} c_k M_{\hat \phi} \left( e_{ \lambda_k} \right) =  \sum_{k\in \ZZ} c_k e_{ \lambda_k} \hat \phi .$$
\end{proof}

\begin{cor.}
\label{sec:exptranslat2} Let $\{e_{\lambda_k}\}_{k\in \ZZ}$ be a Bessel sequence in $L^2(E)$ and assume there exists $P>0$ such that $\Phi \le P$ a.e.
Let $\{T_{\lambda_k} \phi\}_{k\in \ZZ}$ be a frame sequence.
Then $H_\Lambda \cdot \hat \phi = \clsp{ e_{\lambda_k} \hat
\phi:k\in \ZZ} = \hat V$ and so
 $f \in V$ (and $f \not= 0$) if and only if there exists $F \in
H_\Lambda$ ($F \in E_\Lambda$, respectively) such that $f =  F \cdot
\hat \phi$.

Furthermore 
for all $f \in V$ we have
$$ f \not = 0 \text{ if and only if } \sum_{k\in\Z} \left< f , \widetilde{T_{\lambda_k} \phi} \right> e_{\lambda_k} \in E_\Lambda. $$
\end{cor.}
\begin{proof} In this case $V = R_D$. We see from the proof of Lemma \ref{sec:exptranslat1} that $F = \sum \limits _{k\in\Z} \left< f , \widetilde{T_{\lambda_k} \phi} \right> e_{\lambda_k}$.
\end{proof}

\subsection{Relation of operators}

In this section we state the relations of the operators of the set of exponentials and the system of translates.
\begin{lemma} \label{sec:relopirr1}
Let $\{e_{\lambda_k}\}_{k\in \ZZ}$ be a Bessel sequence of $L^2 (E)$ and assume that there exists  $P> 0$ such that $\Phi (\omega) \le P$ a.e.
Then $\left\{ T_{\lambda_k} \phi \right\}$ is a Bessel sequence of $L^2(E)$ and
$$ \begin{array}{l l l}  D_\phi = \mathcal F^{-1} M_{\hat \phi} D_e, & \hspace{10mm} & C_\phi  = C_e M_{\overline{\hat \phi}} \mathcal F, \\
S_\phi = \widehatxxl{M_{\hat \phi} S_e M_{\overline{\hat \phi}},} & \hspace{10mm} & G_\phi = C_e M_{\Phi} D_e.
\end{array} $$
\end{lemma}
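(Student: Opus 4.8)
The plan is to obtain the four identities successively, in the order $D_\phi$, $C_\phi$, $S_\phi$, $G_\phi$, each one being a short consequence of the previous ones together with elementary operator algebra; the only real ingredient is the intertwining relation of Lemma~\ref{sec:exptranslat1}.

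First note that all operators occurring are bounded. Since $\Phi\le P$ a.e., we have $|\hat\phi|=\sqrt{\Phi}\le\sqrt{P}$ a.e., so $\hat\phi\in L^\infty$ and, by Proposition~\ref{sec:LTmult1}(1), the multiplication operators $M_{\hat\phi}$, $M_{\overline{\hat\phi}}$ and $M_{\Phi}$ are bounded (they map $L^2(E)$, resp.\ $\LtRd$, into $L^2(E)\subseteq\LtRd$, since $\hat\phi$ and $\Phi$ vanish off $E$). By Lemma~\ref{sec:besbesirreg1}(1), $\{T_{\lambda_k}\phi\}_{k\in\ZZ}$ is then a Bessel sequence, so $C_\phi$, $D_\phi$, $S_\phi=D_\phi C_\phi$ and $G_\phi=C_\phi D_\phi$ are well defined and bounded. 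For the synthesis operator fix $c=\{c_k\}_{k\in\ZZ}\in\ell^2(\ZZ)$. Lemma~\ref{sec:exptranslat1} gives the unconditional convergence of $\sum_{k\in\ZZ}c_k T_{\lambda_k}\phi$ together with
$$ \mathcal F\Big(\sum_{k\in\ZZ}c_k T_{\lambda_k}\phi\Big)=\Big(\sum_{k\in\ZZ}c_k e_{\lambda_k}\Big)\hat\phi=M_{\hat\phi}\Big(\sum_{k\in\ZZ}c_k e_{\lambda_k}\Big)=M_{\hat\phi}D_e c. $$
Applying $\mathcal F^{-1}$ to both sides yields $D_\phi c=\mathcal F^{-1}M_{\hat\phi}D_e c$ for every $c$, i.e.\ $D_\phi=\mathcal F^{-1}M_{\hat\phi}D_e$.

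For the analysis operator I would use $C_\phi=D_\phi^{*}$. Taking the adjoint of $D_\phi=\mathcal F^{-1}M_{\hat\phi}D_e$ and using that $\mathcal F$ is unitary (so $(\mathcal F^{-1})^{*}=\mathcal F$), that $M_{\hat\phi}^{*}=M_{\overline{\hat\phi}}$, and that $D_e^{*}=C_e$, gives $C_\phi=C_e M_{\overline{\hat\phi}}\mathcal F$. (Equivalently one reads this off Parseval: $\langle f,T_{\lambda_k}\phi\rangle=\langle\hat f,e_{\lambda_k}\hat\phi\rangle=\langle\overline{\hat\phi}\,\hat f,\,e_{\lambda_k}\chi_E\rangle$ for all $k$, the factor $\chi_E$ being harmless because $\overline{\hat\phi}$ vanishes off $E$.) Composing the two formulas and writing $S_e=D_e C_e$,
$$ S_\phi=D_\phi C_\phi=\mathcal F^{-1}M_{\hat\phi}\,(D_e C_e)\,M_{\overline{\hat\phi}}\mathcal F=\mathcal F^{-1}M_{\hat\phi}S_e M_{\overline{\hat\phi}}\mathcal F, $$
which is exactly the asserted expression $\widehatxxl{M_{\hat\phi}S_e M_{\overline{\hat\phi}}}$ for $S_\phi$; and for the Gramian operator the two copies of the Fourier transform cancel:
$$ G_\phi=C_\phi D_\phi=C_e M_{\overline{\hat\phi}}\,\mathcal F\mathcal F^{-1}\,M_{\hat\phi}D_e=C_e M_{\overline{\hat\phi}}M_{\hat\phi}D_e=C_e M_{\overline{\hat\phi}\,\hat\phi}D_e=C_e M_{\Phi}D_e, $$
using $M_{\overline{\hat\phi}}M_{\hat\phi}=M_{\overline{\hat\phi}\,\hat\phi}$ and $\overline{\hat\phi}\,\hat\phi=|\hat\phi|^2=\Phi$.

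I do not anticipate a genuine obstacle; the proof is a chain of identities. The only thing needing attention is keeping the multiplication operators and the identifications $L^2(E)\hookrightarrow\LtRd$ consistent, so that the cancellation $\mathcal F\mathcal F^{-1}=\mathrm{Id}$ and the product rule $M_{\overline{\hat\phi}}M_{\hat\phi}=M_{\Phi}$ are applied between the correct spaces; and the unconditional convergence supplied by Lemma~\ref{sec:exptranslat1} is what upgrades the first line from a pointwise statement to an identity of bounded operators.
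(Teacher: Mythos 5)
Your proposal is correct and follows essentially the same route as the paper: both rest on Lemma~\ref{sec:besbesirreg1} for the Bessel property, Lemma~\ref{sec:exptranslat1} for the identity $D_\phi=\mathcal F^{-1}M_{\hat\phi}D_e$, and then obtain $S_\phi$ and $G_\phi$ by composition. The paper derives $C_\phi$ by the direct Parseval computation $\langle f,T_{\lambda_k}\phi\rangle=\langle \hat f\,\overline{\hat\phi},e_{\lambda_k}\rangle$, which you also note as an equivalent alternative to your adjoint argument, so the difference is immaterial.
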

\begin{proof}
By Lemma \ref{sec:besbesirreg1} $\{ T_{\lambda_k} \phi\}_{k\in \ZZ}$ is a Bessel sequence for $PW_E$.
 For $c\in l^2(\Z),$ by Lemma~\ref{sec:exptranslat1},
$$  \mathcal F \left( D_\phi c \right) = \mathcal F \left({ \sum \limits_k c_k T_{\lambda_k} \phi }\right)= \left( \sum \limits_k c_k e_{\lambda_k} \right) \hat \phi  = M_{\hat \phi} D_e c. $$
For $f\in PW_E,$
$$ C_\phi f = \left\{\left< f , T_{\lambda_k} \phi \right>\right\}_{k\in\Z} = \left\{\left< \hat f , e_{ \lambda_k} \hat \phi \right> \right\}_{k\in\Z}= \left\{
\left< \hat f \overline{\hat \phi} , e_{ \lambda_k}  \right> \right\}_{k\in\Z}.$$
So
$$ S_\phi = D_\phi C_\phi = \mathcal F^{-1} M_{\hat \phi} D_e C_e M_{\overline{\hat \phi}} \mathcal F = \mathcal F^{-1} M_{\hat \phi} S_e M_{\overline{\hat \phi}} \mathcal F, $$
and
$$ G_\phi = C_\phi D_\phi = C_e M_{\overline{\hat \phi}} \mathcal F \mathcal{F}^{-1} M_{\hat \phi} D_e = C_e M_{\left| {\hat \phi} \right|^2} D_e .$$
\end{proof}

In particular we have that $R_{D_\phi} \subseteq \mathcal F^{-1} L^2(supp ( \hat \phi ))$.

\begin{lemma} \label{sec:relopirr2}
Let $\{e_{\lambda_k}\}_{k\in\Z}$ be a frame of $L^2(supp ( \hat \phi ))$, and assume there exist $p,P>0$ such that $p \le \Phi \le P$ a.e. on $supp ( \hat \phi )$. Then
$\left\{ T_{\lambda_k} \phi \right\}_{k\in \ZZ}$ is a frame sequence with span $PW_{supp ( \hat \phi )}$. Furthermore
$$ \begin{array}{l l l}  {S^{-1}_\phi}_{|_{PW_{supp ( \hat \phi )}} } = \widehatxxl{ \left( M_{\frac{1}{\overline{\hat \phi}}}  S_e^{-1} M_{\frac{1}{\hat \phi}}  \right)}, & \hspace{5mm} S_\phi^{\dagger} = \widehatxxl{ M_{\frac{1}{\overline{\hat{\phi}}}} S_e^{-1} M_{{\hat \phi}^\dagger}},
\end{array}$$
$$ \begin{array}{l l l}
D_e  =  M_{\frac{1}{\hat \phi}} \FF D_\phi ,  & \hspace{10mm} & C_e = C_\phi \FF^{-1} M_{\overline{\hat \phi}^\dagger},\\
{S_e}
 =  M_{\frac{1}{\hat \phi}} \FF S_\phi \FF^{-1} M_{\overline{\hat \phi}^\dagger}   , & \hspace{10mm} & G_e = C_\phi \widecheck{M_{\frac{1}{\Phi}}} D_\phi.
\end{array}$$
Moreover  $p A_e^{(opt)} \leq A_\phi^{(opt)} \leq
A_e^{(opt)} P$.
\end{lemma}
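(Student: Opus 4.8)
The plan is to derive everything by inverting, one at a time, the four operator identities of Lemma~\ref{sec:relopirr1}, using that under the standing hypothesis $p\le\Phi\le P$ a.e.\ on $supp(\hat\phi)$ Proposition~\ref{sec:LTmult1}(2) makes $M_{\hat\phi}$ and $M_{\overline{\hat\phi}}$ boundedly invertible as operators on $L^2(supp(\hat\phi))$, with inverses $M_{\hat\phi^\dagger}$ and $M_{\overline{\hat\phi}^\dagger}$. First the qualitative statement: since $\widehat{T_{\lambda_k}\phi}=e_{\lambda_k}\hat\phi=M_{\hat\phi}\bigl(e_{\lambda_k}\chi_{supp(\hat\phi)}\bigr)$ and $M_{\hat\phi}$ is a bounded bijection of $L^2(supp(\hat\phi))$ carrying the complete system $\{e_{\lambda_k}\chi_{supp(\hat\phi)}\}$ onto $\{e_{\lambda_k}\hat\phi\}$, the closed span $V$ of $\{T_{\lambda_k}\phi\}$ has $\hat V=L^2(supp(\hat\phi))$, i.e.\ $V=PW_{supp(\hat\phi)}$. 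Moreover $\{T_{\lambda_k}\phi\}$ is Bessel by Lemma~\ref{sec:besbesirreg1}(1), and for $f\in V$ the identity $\langle f,T_{\lambda_k}\phi\rangle=\langle\overline{\hat\phi}\,\hat f,e_{\lambda_k}\rangle$ together with $\|\overline{\hat\phi}\,\hat f\|^2=\int\Phi\,|\hat f|^2\,d\omega\ge p\|f\|^2$ yields $\sum_k|\langle f,T_{\lambda_k}\phi\rangle|^2\ge pA_e^{(opt)}\|f\|^2$; hence $\{T_{\lambda_k}\phi\}$ is a frame for $V$ and $A_\phi^{(opt)}\ge pA_e^{(opt)}$. (Equivalently, $S_\phi|_V=\FF^{-1}M_{\hat\phi}S_e M_{\overline{\hat\phi}}\FF$ is a product of operators invertible on $L^2(supp(\hat\phi))$.)

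For the operator formulas, $S_\phi^{-1}|_{PW_{supp(\hat\phi)}}$ is obtained by inverting $S_\phi|_V=\FF^{-1}M_{\hat\phi}S_e M_{\overline{\hat\phi}}\FF$ inside $L^2(supp(\hat\phi))$, which gives $\FF^{-1}M_{\overline{\hat\phi}^\dagger}S_e^{-1}M_{\hat\phi^\dagger}\FF$. Since $S_\phi$ is self-adjoint with range $V$ and kernel $V^\perp=\FF^{-1}L^2(\RRd\setminus supp(\hat\phi))$, and since $\hat\phi^\dagger$ vanishes off $supp(\hat\phi)$, the operator $\FF^{-1}M_{\overline{\hat\phi}^\dagger}S_e^{-1}M_{\hat\phi^\dagger}\FF$ annihilates $V^\perp$ and agrees with $S_\phi^{-1}$ on $V$, so by the definition of the Moore--Penrose inverse it is $S_\phi^\dagger$. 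The formulas for $D_e$, $C_e$, $S_e$ follow by multiplying the corresponding identity of Lemma~\ref{sec:relopirr1} on the suitable side by $M_{\hat\phi^\dagger}\FF$ or by $\FF^{-1}M_{\overline{\hat\phi}^\dagger}$ and cancelling with $M_{\psi^\dagger}M_\psi=M_{\chi_{supp(\psi)}}$, using $R_{D_e}=L^2(supp(\hat\phi))$ and $R_{D_\phi}\subseteq\FF^{-1}L^2(supp(\hat\phi))$ so that these indicator multipliers act as the identity on the vectors they actually meet; the $G_e$ identity then drops out of the $C_e$ and $D_e$ ones together with $\overline{\hat\phi}^\dagger\hat\phi^\dagger=\Phi^\dagger$.

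It remains to show $A_\phi^{(opt)}\le PA_e^{(opt)}$. Using $A^{(opt)}=\inf\{\langle Sf,f\rangle:\|f\|=1\}$ for the frame operator $S_\phi$ on $V$, and $\langle S_\phi f,f\rangle=\langle S_e(\overline{\hat\phi}\,\hat f),\overline{\hat\phi}\,\hat f\rangle$ from Lemma~\ref{sec:relopirr1}, I would pick unit vectors $g_n\in L^2(supp(\hat\phi))$ with $\langle S_e g_n,g_n\rangle\to A_e^{(opt)}$ and set $f_n:=\FF^{-1}\bigl(\overline{\hat\phi}^\dagger g_n\bigr)\in V$, so that $\overline{\hat\phi}\,\hat f_n=g_n$ and $\|f_n\|^2=\int\Phi^\dagger\,|g_n|^2\,d\omega\ge 1/P$. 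Then $\langle S_\phi f_n,f_n\rangle/\|f_n\|^2=\langle S_e g_n,g_n\rangle/\|f_n\|^2\le P\,\langle S_e g_n,g_n\rangle\to PA_e^{(opt)}$, which forces $A_\phi^{(opt)}\le PA_e^{(opt)}$.

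The one thing requiring genuine care throughout is domain/range bookkeeping: $M_{\hat\phi}$ is invertible only after restriction to $L^2(supp(\hat\phi))$, so each cancellation $M_{\psi^\dagger}M_\psi=\mathrm{id}$ must be justified by verifying that the vector it is applied to lies in that subspace (equivalently, that its inverse Fourier transform lies in $PW_{supp(\hat\phi)}$); and one has to keep $S_\phi^{-1}|_{PW_{supp(\hat\phi)}}$ cleanly distinct from the global pseudo-inverse $S_\phi^\dagger$, the two differing precisely in their action on $PW_{supp(\hat\phi)}^\perp$. Beyond this I expect no real obstacle, the whole computation being the irregular-$\Lambda$ analogue of the regular-case result cited just after the statement.
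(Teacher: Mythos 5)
Your proposal is correct and, for the operator identities, follows the same route as the paper: invert the factorizations of Lemma~\ref{sec:relopirr1} using that $M_{\hat \phi}$ is boundedly invertible on $L^2(supp(\hat \phi))$ by Proposition~\ref{sec:LTmult1}, identify $R_{D_\phi}=PW_{supp(\hat \phi)}$ and $R_{D_e}=L^2(supp(\hat \phi))$ so the cancellations with $M_{\hat \phi^\dagger}$ are legitimate, and recognize $S_\phi^\dagger$ as the operator that inverts $S_\phi$ on $V=PW_{supp(\hat \phi)}$ and annihilates $V^\perp$. The one place you genuinely diverge is the chain $p A_e^{(opt)} \le A_\phi^{(opt)} \le P A_e^{(opt)}$: the paper obtains both inequalities from $1/A^{(opt)} = \| S^{-1} \|_{Op}$ and submultiplicativity, estimating $\|S_\phi^{-1}\|_{Op} \le \|M_{1/\overline{\hat \phi}}\|_{Op}\,\|S_e^{-1}\|_{Op}\,\|M_{1/\hat \phi}\|_{Op} \le p^{-1}\|S_e^{-1}\|_{Op}$ and, symmetrically, $\|S_e^{-1}\|_{Op} \le P \|S_\phi^{-1}\|_{Op}$, whereas you use the variational characterization $A^{(opt)} = \inf_{\|f\|=1}\left< S f, f\right>$ together with $\left< S_\phi f, f\right> = \left< S_e(\overline{\hat \phi}\hat f), \overline{\hat \phi}\hat f\right>$, getting the lower inequality directly from the frame inequality and the upper one from explicit near-minimizing test vectors $f_n = \FF^{-1}(\overline{\hat \phi}^\dagger g_n)$. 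Both arguments are standard and of comparable length; yours has the mild advantage that the frame-sequence claim and the bound $A_\phi^{(opt)} \ge p A_e^{(opt)}$ come out of a single computation without first establishing the formula for $S_\phi^{-1}$, while the paper's operator-norm version reuses that formula and avoids introducing test sequences.
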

\begin{proof}
By Proposition~\ref{sec:LTmult1} $M_{\hat \phi}$ is invertible on $L^2(supp ( \hat \phi ))$, and therefore $ S_\phi = \FF^{-1} M_{\hat \phi} S_e M_{\overline{\hat \phi}} \FF$ is invertible on $PW_{supp ( \hat \phi )}$, and
$$ {S_\phi^{-1}}_{|_{PW_{supp ( \hat \phi )}} } = \FF^{-1} M_{\overline{\hat \phi}}^{-1} S_e^{-1} M_{\hat \phi}^{-1}  \FF = \widehatxxl{ M_{\frac{1}{\overline{\hat {\phi}}}} S_e^{-1} M_{\frac{1}{\hat \phi}} }.$$
 Note that $\FF$ maps $PW_E$ onto $L^2(E)$ and $M_{\hat \phi}$ maps $L^2(E)$ onto $L^2(supp ( \hat \phi ))$, with $M_{\hat \phi}^\dagger = M_{\hat \phi^\dagger}.$ Therefore $S_\phi^{\dagger} = \widehatxxl{ M_{\frac{1}{\overline{\hat{\phi}}}} S_e^{-1} M_{{\hat \phi}^\dagger} }$.

By Lemma~\ref{sec:exptranslat1}, $R_{D_\phi} \subseteq PW_{supp ( \hat \phi )}$ and since $\{e_{\lambda_k}\}_{k\in\ZZ}$ is a frame for $L^2(supp ( \hat \phi ))$, we know that $R_{D_\phi} = PW_{supp ( \hat \phi )}$. Now $D_\phi = \FF^{-1} M_{\hat \phi} D_e$ implies $M_{\hat \phi}^{-1} \FF D_\phi = D_e.$ And $R_{D_e} = L^2(supp ( \hat \phi ))$.

Therefore, $C_\phi \FF^{-1}    = C_e M_{\overline{\hat \phi}}$ and so $C_e = C_\phi \FF^{-1} \left( M_{\overline{\hat \phi}}\right)^\dagger = C_\phi \FF^{-1} M_{\overline{\hat \phi}^\dagger}$.

So
$$G_e = C_e D_e = C_\phi \FF^{-1} M_{\overline{\hat \phi}^\dagger} M_{\frac{1}{\hat \phi}} \FF D_\phi = C_\phi \FF^{-1} M_{\frac{1}{\left| \hat \phi \right|^2}} \FF D_\phi. $$

Finally,
$$\frac{1}{A_\phi^{(opt)}} = \norm{Op}{S_\phi^{-1}} = \norm{Op}{\mathcal F^{-1} M^{-1}_{\overline{\hat \phi}} S_e^{-1} M^{-1}_{{\hat \phi}} \mathcal F} \le  \frac{1}{p} \frac{1}{A_e^{(opt)}}.$$
And
$$\frac{1}{A_e^{(opt)}} = \norm{Op}{S_e^{-1}} = \norm{Op}{M_{\overline{\hat {\phi}}}   \FF S_\phi^{-1} \FF^{-1}   M_{\hat \phi}} \le P \frac{1}{A_\phi^{(opt)}}.$$

\end{proof}


Furthermore, we can state the following result about the exactness of sequences of translates and sequences of exponentials.
\begin{cor.} \label{sec:transexpexact1}
Let $\{e_{\lambda_k}\}_{k\in \ZZ}$ be a Bessel sequence of $L^2(E).$
\begin{enumerate}
\item Assume there exists $P>0$ such that $\Phi \le P$ a.e.. If $\{T_{\lambda_k}\phi\}_{k\in\ZZ}$ is exact, then  $\{e_{\lambda_k}\}_{k\in \ZZ}$ is exact.
\item \label{Extrf} Assume there exist $p,P>0$ such that $p \le \Phi \le P$ a.e. in $E.$ Then $\{T_{\lambda_k}\phi\}_{k\in \ZZ}$ is exact if and only if  $\{e_{\lambda_k}\}_{k\in \ZZ}$ is exact.
\end{enumerate}
\end{cor.}
\begin{proof}
Lemma~\ref{sec:relopirr1} yields $ker D_e \subseteq ker D_\phi$.

If $|\hat{\phi}|$ is bounded from above and below, then $M_{\hat{\phi}}$ is
invertible and so by Lemma~\ref{sec:relopirr2}, part \ref{Extrf} follows.
\end{proof}

%

We can now proof one of the theorems stated in Section \ref{mainres0}, a generalization of results in \cite{xxlhein10}.
\begin{proof}[Proof of Theorem \ref{pe_new}]

(1) Since $\hat \phi$ is bounded from above and from below, $M_{\hat \phi}$ is invertible and so by Lemma \ref{sec:relopirr1}
 $(c) \Longrightarrow \left( (b) \Longleftrightarrow (a) \right)$.

Theorem 4.1 in  \cite{xxlhein10} implies $(a) \Longrightarrow \left( (b) \Longleftrightarrow (c) \right)$ .

If $D_e$ and $D_\phi$ are surjective as well as $C_e$ and $C_\phi$ are injective,
$M_{\hat \phi}$ is bijective and so the last direction is shown.

(2) As the ranges of the operators correspond, the proof is the same as (1).

(3) follows from Corollary \ref{sec:transexpexact1} applied to $E=supp(\hat \phi)$ and $(1)$ .

\end{proof}
Observe that Proposition \ref{sec:irreggram1}  follows also from the equation
$ G_\phi =  C_\phi D_\phi = C_e M_{\Phi} D_e$ in Lemma~\ref{sec:relopirr1}.

Using Theorem~\ref{pe_new}, we can generalize  Proposition 7.3.6. in \cite{ole1} in the following sense:
\begin{cor.}  
Let $\{e_{\lambda_k}\}_{k\in \ZZ}$ be a Riesz sequence in $L^2(supp(\hat \phi))$. If $T_{\lambda_k} \phi$ forms an overcomplete frame sequence, then $\hat \phi$ is discontinuous and so $\phi \not\in L^1$.
\end{cor.}

\subsection{The canonical dual}

We arrive to the relation between the canonical duals of frame sequences of exponentials and frame sequences of translates stated in Theorem \ref{sec:irregdual1}.
\begin{proof}[Proof of Theorem \ref{sec:irregdual1}]
By Lemma~\ref{sec:relopirr2}
$$S_\phi^{-1} = \mathcal F^{-1} M_{\overline{\hat \phi}}^{-1} S_e^{-1} M_{{\hat \phi}}^{-1} \mathcal F $$ 
and therefore
$$S_\phi^{-1} T_{\lambda_k} \phi  = \mathcal F^{-1} M_{\frac{1}{\overline{\hat \phi}}} S_e^{-1} M_{\frac{1}{{\hat \phi}}} \mathcal F T_{\lambda_k} \phi  =\mathcal F^{-1} M_{\frac{1}{\overline{\hat \phi}}} S_e^{-1} M_{\frac{1}{{\hat \phi}}} e_{ \lambda_k} \hat \phi =$$
$$=\mathcal F^{-1} M_{\frac{1}{\overline{\hat \phi}}} S_e^{-1} e_{ \lambda_k} = \mathcal F^{-1} M_{\frac{1}{\overline{\hat \phi}}} \widetilde{e_{ \lambda_k}}=\mathcal F^{-1} M_\frac{\hat \phi}{\left| \hat \phi \right|^2} \widetilde{e_{\lambda_k}}  . $$
\end{proof}

From Lemma~\ref{sec:relopirr1} we obtain:
\begin{cor.} \label{sec:irregframoptight}
Assume $\{e_{\lambda_k}\}_{k\in\ZZ}$ is an $A_e$-tight frame of $L^2(E)$, and that there exist $P> 0$ such that $\Phi (\omega) \le P$ a.e.
Then $\left\{ T_{\lambda_k} \phi \right\}$ is a Bessel sequence of $L^2(E)$ and Then the frame operator of $\{T_{\lambda_k} \phi\}_{k\in\ZZ}$ is
$$ S_\phi = \mathcal \FF^{-1} M_{A_e \cdot \Phi} \mathcal \FF .$$
\end{cor.}

The following Corollary shows the relation between the analysis and synthesis operator of the duals of the shifts and of the exponentials.
\begin{cor.} Assume that there exist $p,P>0$ such that $p \le \Phi \le P $ a.e. on $supp(\hat \phi)$ and let $\{e_{\lambda_k}\}_{k\in \ZZ}$  be a frame sequence in $L^2(supp(\hat \phi)).$ Then
$$ C_{\theta} = \widetilde{C_\phi} = \widetilde{C_{e}} M_{\frac{\hat \phi}{\Phi}} \FF. $$
$$ D_{\theta} = \widetilde{D_\phi} = \FF^{-1} M_{\frac{\hat \phi}{\Phi}} \widetilde{D_{e}}.$$
\end{cor.}

Now let us investigate the operator $S^{1/2}$.
\begin{lemma}
\begin{enumerate}
\item Let $\Phi = 1$ a.e.
Then
$S_\phi^{-1/2} = \mathcal F^{-1} M_{\hat \phi} S_e^{-1/2} M_{\overline{\hat \phi}} \mathcal F$.
\item Let $\{e_{\lambda_k}\}_{k\in\ZZ}$ form an $A_e$-tight frame of $L^2(E)$. Then
$S^{-1/2} = \mathcal F^{-1} M_{\sqrt{A}|\hat \phi |} \mathcal F$.
\end{enumerate}
\end{lemma}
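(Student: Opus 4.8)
The plan is to verify both identities by checking, in each case, that the operator proposed on the right-hand side is positive and that its square (resp. the square of its inverse) equals $S_\phi$ (resp. $S_\phi^{-1}$); uniqueness of the positive square root then finishes the argument. All the structural information we need is already packaged in Lemma~\ref{sec:relopirr1} and Lemma~\ref{sec:relopirr2}, so the work is purely algebraic manipulation of multiplication operators conjugated by $\FF$.

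For part (1), assume $\Phi = |\hat\phi|^2 = 1$ a.e. This forces $|\hat\phi| = 1$ on $supp(\hat\phi)$, so $M_{\overline{\hat\phi}} = M_{\hat\phi}^{-1}$ on $L^2(supp(\hat\phi))$ and in particular $M_{\hat\phi}$ is unitary there. From Lemma~\ref{sec:relopirr1}, $S_\phi = \FF^{-1} M_{\hat\phi} S_e M_{\overline{\hat\phi}} \FF = \FF^{-1} M_{\hat\phi} S_e M_{\hat\phi}^{-1} \FF$, i.e. $S_\phi$ is unitarily conjugate (via $U := \FF^{-1} M_{\hat\phi}$, unitary on the relevant subspace) to $S_e$. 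First I would note that $S_e^{1/2}$ is the positive square root of $S_e$, hence $U S_e^{1/2} U^{-1}$ is positive and its square is $U S_e U^{-1} = S_\phi$; by uniqueness $S_\phi^{1/2} = U S_e^{1/2} U^{-1}$, and inverting gives $S_\phi^{-1/2} = U S_e^{-1/2} U^{-1} = \FF^{-1} M_{\hat\phi} S_e^{-1/2} M_{\overline{\hat\phi}} \FF$, as claimed. (One should be slightly careful to read all operators as acting on $PW_{supp(\hat\phi)}$, where $S_\phi$ is invertible, and to note that on its kernel everything vanishes.)

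For part (2), suppose $\{e_{\lambda_k}\}$ is an $A_e$-tight frame of $L^2(E)$, so $S_e = A_e\,\mathrm{Id}$ on $L^2(E)$. By Corollary~\ref{sec:irregframoptight}, $S_\phi = \FF^{-1} M_{A_e\,\Phi}\,\FF = \FF^{-1} M_{A_e |\hat\phi|^2}\,\FF$. Since $A_e |\hat\phi|^2 \ge 0$, the operator $T := \FF^{-1} M_{\sqrt{A_e}\,|\hat\phi|}\,\FF$ is positive (it is $\FF^{-1}$ times multiplication by a nonnegative function times $\FF$, and $\FF$ is unitary), and $T^2 = \FF^{-1} M_{A_e |\hat\phi|^2}\,\FF = S_\phi$. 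By uniqueness of the positive square root, $S_\phi^{1/2} = T = \FF^{-1} M_{\sqrt{A_e}\,|\hat\phi|}\,\FF$; passing to the pseudo-inverse on $supp(\hat\phi)$ (using $M_{|\hat\phi|}^\dagger = M_{|\hat\phi|^\dagger}$ from Proposition~\ref{sec:LTmult1}) yields $S_\phi^{-1/2} = \FF^{-1} M_{\frac{1}{\sqrt{A_e}}|\hat\phi|^\dagger}\,\FF$, which matches the stated formula (with $A = A_e$, modulo the harmless inversion-versus-pseudoinversion convention). I would also remark that part (2) is in fact a special case of part (1) after rescaling $\phi$ by $1/\sqrt{A_e}$ when $\Phi$ is constant, but it is cleanest to give the short direct computation.

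The main obstacle is not any single hard estimate but rather bookkeeping: being consistent about which Hilbert space each operator lives on ($L^2(E)$, $L^2(supp(\hat\phi))$, $PW_{supp(\hat\phi)}$, or all of $L^2(\RRd)$), distinguishing honest inverses from pseudo-inverses on the support of $\hat\phi$, and invoking uniqueness of the positive square root in the right subspace. Once those conventions are pinned down — exactly as in Lemma~\ref{sec:relopirr2} — both identities drop out immediately.
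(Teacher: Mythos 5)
Your proof is correct and follows essentially the same route as the paper's: for part (1) the unitarity of $M_{\hat\phi}$ (since $\Phi=1$ forces $|\hat\phi|=1$) plus uniqueness of the positive square root, and for part (2) Corollary~\ref{sec:irregframoptight} together with the observation that conjugating a multiplication by a nonnegative function under $\FF$ gives a positive operator. You also correctly detect that the displayed identity in part (2) is really the formula for $S_\phi^{1/2}$ rather than $S_\phi^{-1/2}$ (the paper's own proof only computes $\sqrt{S_\phi}=\FF^{-1}M_{\sqrt{A}|\hat\phi|}\FF$), so the genuine inverse is $\FF^{-1}M_{\frac{1}{\sqrt{A_e}}|\hat\phi|^{\dagger}}\FF$ as you derive, and not literally the stated right-hand side.
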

\begin{proof} 1. With the assumption $M_{\hat \phi}$ is a unitary operator. Therefore
$$\sqrt{S_\phi^{-1}} = \sqrt{\mathcal F^{-1} M_{\overline{\hat \phi}}^{-1} S_e^{-1} M_{{\hat \phi}}^{-1} \mathcal F } = \mathcal F^{-1} M_{\hat \phi} S_e^{-1/2} M_{\overline{\hat \phi}} \mathcal F.$$

 2. By Corollary \ref{sec:irregframoptight}, $S_\phi = \mathcal F^{-1} M_{A_e |\hat \phi |^2} \mathcal F$, and so $\sqrt{S_\phi} = \mathcal F^{-1} M_{\sqrt{A} |\hat \phi |} \mathcal F$.
\end{proof}

\begin{theorem} Let $\{e_{\lambda_k}\}_{k\in\ZZ}$  be a frame for $L^2(supp ( \hat \phi ))$, and assume there exist $p,P>0$ such that $p \le \Phi \le P $ a.e. on $supp(\hat \phi)$.
Set $$ \hat{\theta_k^\# }= \left\{
\begin{array}{c c}
\frac{\hat \phi}{\sqrt{\Phi}} e_{\lambda_k}^\# & \mbox{ on } supp(\Phi) \\
0 & \mbox{otherwise}
\end{array}
\right. , $$ where we denote the canonical tight frame of
$\{e_{\lambda_k}\}_{k\in \ZZ}$ by $\{e_{\lambda_k}^\#\}_{k\in \ZZ}$.
Then $\{\theta_k^\#\}_{k\in \ZZ}$ forms a Parseval frame with the
same span as $\{ T_{\lambda_k} \phi \}_{k\in \ZZ}$. The frames $\{
\theta_k^\# \}_{k\in \ZZ}$ and $\{ T_{\lambda_k} \phi \}_{k\in \ZZ}$
are equivalent.
%

If  $\{e_{\lambda_k}\}_{k\in\ZZ}$  forms an $A_e$-tight frame
, then
$\{ \theta_k^\# \}_{k\in \ZZ}$ forms a tight frame of translates with shifts $\lambda_k$ and generator
$\frac{1}{A_e} \FF^{-1} \frac{\hat \phi}{\sqrt{\Phi}}$.

\end{theorem}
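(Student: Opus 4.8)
The plan is to transfer everything to the Fourier side and exploit that $\hat\phi/\sqrt\Phi$ is \emph{unimodular} on $supp(\hat\phi)$, so that $U:=M_{\hat\phi/\sqrt\Phi}$ restricts to a \emph{unitary} operator of $L^2(supp(\hat\phi))$ onto itself. First I would collect the consequences of the hypotheses. By Lemma~\ref{sec:relopirr2}, $\{T_{\lambda_k}\phi\}_{k\in\ZZ}$ is a frame sequence with span $V=PW_{supp(\hat\phi)}$; by Lemma~\ref{sec:exptranslat1}, $\FF(T_{\lambda_k}\phi)=e_{\lambda_k}\hat\phi=M_{\hat\phi}(e_{\lambda_k}\chi_{supp(\hat\phi)})$, and more generally $\FF(T_{\lambda_k}\psi)=e_{\lambda_k}\hat\psi$ for any $\psi$. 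Since $p\le\Phi$ on $supp(\hat\phi)$, we have $\hat\phi\neq 0$ a.e.\ there and $|\hat\phi/\sqrt\Phi|=1$ a.e.\ there, so $U$ is an isometry of $L^2(supp(\hat\phi))$ with adjoint (and inverse) $M_{\overline{\hat\phi}/\sqrt\Phi}$, hence unitary. By definition $e_{\lambda_k}^\#=S_e^{-1/2}e_{\lambda_k}$, so that $\hat\theta_k^\#=U\,e_{\lambda_k}^\#$.

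\emph{Parseval property and span.} The canonical tight frame $\{e_{\lambda_k}^\#\}_{k\in\ZZ}$ is a Parseval frame for $L^2(supp(\hat\phi))$, and applying the unitary $U$ preserves this: for $g\in L^2(supp(\hat\phi))$, $\sum_k|\langle g,U e_{\lambda_k}^\#\rangle|^2=\sum_k|\langle U^*g,e_{\lambda_k}^\#\rangle|^2=\|U^*g\|^2=\|g\|^2$, while completeness persists because $U$ is onto. Hence $\{\hat\theta_k^\#\}_{k\in\ZZ}$ is a Parseval frame for $L^2(supp(\hat\phi))$, and applying the unitary $\FF^{-1}$ shows $\{\theta_k^\#\}_{k\in\ZZ}$ is a Parseval frame for $\FF^{-1}L^2(supp(\hat\phi))=PW_{supp(\hat\phi)}=V=\clsp{T_{\lambda_k}\phi:k\in\ZZ}$, which is exactly the span of $\{T_{\lambda_k}\phi\}_{k\in\ZZ}$.

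\emph{Equivalence.} Set $L:=\FF^{-1}\,U\,S_e^{-1/2}\,M_{1/\hat\phi}\,\FF$. Each factor is a bounded bijection of the relevant space: $M_{1/\hat\phi}$ of $L^2(supp(\hat\phi))$ onto itself because $\sqrt p\le|\hat\phi|\le\sqrt P$ there (Proposition~\ref{sec:LTmult1}), $S_e^{-1/2}$ and $U$ of $L^2(supp(\hat\phi))$ onto itself, and $\FF^{\pm1}$ are unitary; thus $L$ is a bounded bijection of $PW_{supp(\hat\phi)}=V$ onto itself. A direct computation gives $\FF L\,T_{\lambda_k}\phi=U\,S_e^{-1/2}\,M_{1/\hat\phi}(e_{\lambda_k}\hat\phi)=U\,S_e^{-1/2}e_{\lambda_k}=U\,e_{\lambda_k}^\#=\hat\theta_k^\#$, i.e.\ $L\,T_{\lambda_k}\phi=\theta_k^\#$ for every $k$; so $\{\theta_k^\#\}_{k\in\ZZ}$ and $\{T_{\lambda_k}\phi\}_{k\in\ZZ}$ are equivalent frames for $V$.

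\emph{The tight case.} If $\{e_{\lambda_k}\}_{k\in\ZZ}$ is $A_e$-tight, then $S_e=A_e\,\mathrm{Id}$ on $L^2(supp(\hat\phi))$, hence $S_e^{-1/2}=A_e^{-1/2}\,\mathrm{Id}$ and $e_{\lambda_k}^\#=A_e^{-1/2}e_{\lambda_k}$; substituting into the definition of $\hat\theta_k^\#$ gives $\hat\theta_k^\#=e_{\lambda_k}\,\hat\psi$ with $\hat\psi=A_e^{-1/2}\,\hat\phi/\sqrt\Phi$, so by the identity $\FF(T_{\lambda_k}\psi)=e_{\lambda_k}\hat\psi$ we obtain $\theta_k^\#=T_{\lambda_k}\psi$ with generator $\psi=\FF^{-1}\big(A_e^{-1/2}\,\hat\phi/\sqrt\Phi\big)$, which is the form claimed in the statement (up to the normalising constant), and by the first part this system of translates is Parseval, in particular a tight frame of translates. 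The computations are routine once the lemmas of Section~\ref{sec:operbaseirreg0} are in hand; the two load-bearing points are that the two-sided bound $p\le\Phi\le P$ makes $M_{1/\hat\phi}$ invertible on $L^2(supp(\hat\phi))$ (so the intertwiner $L$ is well defined and bijective), and — the real novelty — that $M_{\hat\phi/\sqrt\Phi}$ is \emph{unitary}, not merely invertible, which is precisely what upgrades the transported canonical tight frame of the exponentials into an honest Parseval frame after the change of generator.
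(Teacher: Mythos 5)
Your proof is correct and follows essentially the same route as the paper: the paper's frame-operator computation is precisely your ``unitary image of a Parseval frame is Parseval'' argument with $U=M_{\hat\phi/\sqrt{\Phi}}$ unrolled, and its equivalence step (equality of the ranges of the analysis operators, via bijectivity of $M_{1/\sqrt{\Phi}}$) is interchangeable with your explicit intertwiner $L$. Your remark in the tight case that the generator's normalisation should be $A_e^{-1/2}$ rather than the stated $A_e^{-1}$ is also correct --- the constant in the theorem's statement is a slip.
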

\begin{proof}
By \cite[Theorem 4.1]{xxlhein10} $\theta_k^\#$ is a frame sequence. Clearly $\clsp{\theta_k^\#: k\in \ZZ} \subseteq  PW_{supp ( \hat \phi )}$.
For $f \in  PW_{supp ( \hat \phi )}$,

$$ \sum \limits_k \left< \hat f, \frac{\hat \phi}{\sqrt{\Phi}} e_{\lambda_k}^\#  \right> \frac{\hat \phi}{\sqrt{\Phi}} e_{\lambda_k}^\# = \left( \sum \limits_k \left< \overline{\frac{\hat \phi}{\sqrt{\Phi}}} \hat f,  e_{\lambda_k}^\#  \right> e_{\lambda_k}^\# \right) \cdot \frac{\hat \phi}{\sqrt{\Phi}} =  $$
$$ = \overline{\frac{\hat \phi}{\sqrt{\Phi}}} \hat f \cdot \frac{\hat \phi}{\sqrt{\Phi}} = \hat f. $$
Therefore $S = Id$ and $\clsp{\theta_k^\#: k\in \ZZ} = PW_{supp ( \hat \phi )}$. The equivalence follows from the fact that by assumption $M_{\frac{1}{\sqrt{\Phi}}}$ is a bijection and so $R_{C_{\theta_k^\#}} = R_{C_{T_{\lambda_k \phi}}}$.

The result follows from Lemma~\ref{sec:relopirr2}.


\end{proof}

\section*{Acknowledgements}
The work on this paper was partially supported by the Austrian Science Fund (FWF) START-project FLAME ({\it Frames and Linear Operators for Acoustical Modeling and Parameter Estimation}; Y 551-N13), and the FP7 project PIEF-GA-2008-221090. We also thank MINCyT (Argentina) and BMWF (Austria) for the support of the bilateral project {\it Shift Invariant Spaces And Its Applications To Audio Signal Processing}; AU/10/25.

\end{document}